 \newtheorem{theorem}{Theorem}[section]
  \newtheorem{corollary}[theorem]{Corollary}
 \newtheorem{remark}[theorem]{Remark}
 \numberwithin{equation}{section}
\title[Estimates  for  the  approximation characteristics]{Estimates  for  the  approximation characteristics 
of the  Nikol'skii-Besov classes  of  functions   with  mixed smoothness  in  the   space $\boldsymbol{B_{q,1}}$}
\author{K.V.~Pozharska$^{1, 2}$, A.S.~Romanyuk$^1$
}
\address{
$^1$Institute of Mathematics of the NAS of Ukraine, Kyiv, Ukraine
$^2$Faculty of Mathematics, Chemnitz University of Technology, 
Germany}
\email{pozharska.k@gmail.com}
\email{romanyuk@imath.kiev.ua}
\keywords{
best approximations, 
Kolmogorov widths, 
linear widths,
entropy numbers}
\begin{document}

\begin{abstract}
Exact-order estimates are obtained for some approximation characteristics of the classes of periodic multivariate functions with  mixed smoothness
(the  Nikol'skii-Besov classes $B^{\boldsymbol{r}}_{p, \theta}$) in  the   space $B_{q,1}$,
$1 \leq p, q \leq \infty$, $1\leq \theta\leq \infty$, whose norm is stronger than the $L_q$-norm. It is shown, that in the multivariate case (in contrast to the univariate)
in most of the considered situations the obtained estimates differ in order from the corresponding estimates in the space $L_q$.

Besides, a significant progress is made in estimates for the considered approximation characteristics of the classes  $B^{\boldsymbol{r}}_{p, \theta}$  in the space $B_{q, 1}$ comparing to the known estimates in the space $L_q$.
\end{abstract}

\maketitle

\setcounter{tocdepth}{1}


\setlength{\parskip}{\medskipamount}

  \section{Introduction}
In the paper, we investigated some approximation characteristics of the  Nikol'skii-Besov classes $B^{\boldsymbol{r}}_{p,\theta}$ of periodic multivariate functions with  mixed smoothness
 in  the   space $B_{q,1}$,
$1 \leq p, q \leq \infty$, $1\leq \theta\leq \infty$.

A motivation for investigating the approximation problems in this particular space
$B_{q,1}$ is given by the fact, that in  $L_q$ some important cases still remain open
(see Open Problems in \cite{Dung_Temlyakov_Ullrich2019,Temlyakov2018} and more detailed remarks in Sections \ref{sec2:App_shFs} and \ref{sec3:widths_en}). 

The space $B_{q,1}$, as a linear subspace  in $L_q$,  $1 \leq q \leq \infty$,  respectively,  has  a peculiarity   in   that  its  norm  is  stronger  than   the  $L_q$-norm.
However, recently it was shown in \cite{Cobos_Kuhn_Sickel_2019} that the
 estimates of approximation numbers of periodic functions from the Sobolev classes in  $L_\infty$  remain valid when we
measure the error in the norm of a ``zero'' Besov space $B_{\infty,1}$. This unexpected result has some practical use, since the Littlewood–Paley characterization of the target spaces simplifies the computation of approximation or other $s$-numbers.  Besides, the authors proved  the chain of continuous embeddings of the associated Wiener algebra into the space $B_{\infty,1}$ and, in turn, into the space of continuous functions where the embedding operators are of norm one. A behavior of entropy numbers of the Triebel-Lizorkin classes, in particular, in the norm of certain  ``zero'' Besov spaces was studied also in \cite{Vybiral2006} (e.g., Theorem 4.11, see also Section 4.6).

   Our findings complement and generalize the results, obtained in the  number  of  papers \cite{Belinsky1998,Fedunyk_Hembarskyi_Hembarska2020, Fedunyk_Hembarska2022,Hembarska_Zaderey2022,Kashin_Temlyakov1994,Romanyuk_Romanyuk2020,  Romanyuk_Romanyuk2021,Romanyuk_Yanchenko2022,Temlyakov1990}, where
  the authors investigate the approximation   of  the Nikol'skii-Besov classes  $B^{\boldsymbol{r}}_{p,\theta}$,  Sobolev classes $W^{\boldsymbol{r}}_{p,\alpha}$ 
  of  periodic  multivariate  functions  with  mixed  smoothness  and   some  their   analogs
  in the space $B_{q,1}$,  $q \in \{ 1, \infty \}$.
     Important is, that we get estimates for the approximation characteristics of the classes
    $B^{\boldsymbol{r}}_{p, \theta}$ in the space $B_{q,1}$  for the case  $1 < q < \infty$.

The paper consists of three parts.

The first part (Section \ref{sec1:FCandS}) plays an auxiliary role. Here we introduce  necessary notation and define the considered function classes and spaces $B_{q,1}$,
 in which we measure the approximation error.

In the second part (Section \ref{sec2:App_shFs}) we obtain exact-order estimates for the approximation of functions from the classes
 $B^{\boldsymbol{r}}_{p,\theta}$ in the space $B_{q,1}$ by their step hyperbolic cross Fourier sums. Besides, we get orders for the best approximation of the indicated function classes in the respective space by trigonometric polynomials with harmonics from the step hyperbolic crosses.

The third part (Section \ref{sec3:widths_en}) is devoted to obtaining the exact-order estimates for the Kolmogorov widths and entropy numbers of the classes
 $B^{\boldsymbol{r}}_{p,\theta}$ in the space $B_{q,1}$ for some relations between the parameters  $p$  and $q$.
Combining these estimates for the Kolmogorov widths and the results of Section \ref{sec2:App_shFs}, we write down the exact-order estimates for the linear widths of these classes in the space $B_{q,1}$.

From the results of our research, we can conclude the following.

A significant progress is made in estimates for the best approximations, widths and entropy numbers
of the classes  $B^{\boldsymbol{r}}_{p, \theta}$  in the space $B_{q,1}$
comparing to the known estimates for the corresponding quantities in the space $L_q$.

  We show, that the estimates for the Kolmogorov and linear widths of the mentioned function classes in the space $B_{q,1}$ are realized by the subspaces of trigonometric polynomials with harmonics from the step hyperbolic crosses of the respective dimension.

We prove, that in most of the cases (for $d\geq 2$) the considered approximation characteristics  of the classes $B^{\boldsymbol{r}}_{p,\theta}$ in the space $B_{q,1}$ differ in order from the corresponding characteristics in the space $L_q$.

 These issues will be discussed more detailed in remarks to the results.

\section{Function classes $B^{\boldsymbol{r}}_{p,\theta}$ and spaces $B_{q,1}$}\label{sec1:FCandS}

 Let $\mathbb{R}^d$  be a $d$-dimensional space with  elements $\boldsymbol{x}=(x_1, \dots, x_d)$ and
 $(\boldsymbol{x}, \boldsymbol{y}) = x_1\,y_1+ \dots + x_d\,y_d$. By $L_p(\mathbb{T}^d)$, $\mathbb{T}^d =  \prod^{d}_{j=1}\,[ 0,\,2\pi)$, we denote the space of
 $2\pi$-periodic in  each variable
 functions  $f(\boldsymbol{x})$,  for  which
\begin{align*}
    \|f\|_p &:=    \|f\|_{{L_p}(\mathbb{T}^d)} =
   \Big((2\pi)^{-d}
  \int_{\mathbb{T}^d}|f(\boldsymbol{x})|^p\,d{\boldsymbol{x}}\Big)^{1/p} <\infty,\quad 1 \leq p < \infty,
\\
 \|f\|_\infty &:=  \|f\|_{{L_\infty}(\mathbb{T}^d)}  =
 \operatorname{ess\,sup}\limits_{{\boldsymbol{x}}\in  {\mathbb{T}^d}}|f(\boldsymbol{x})|< \infty.
  \end{align*}

We put
  $$
 L^0_p := L^0_p(\mathbb{T}^d) := \{
 f \in L_p(\mathbb{T}^d)\colon \int^{2\pi}_0 f(\boldsymbol{x})dx_{j} = 0 \ \ \text{a.e.}, \, j = 1, \dots, d \}.
  $$

  For a function $f \in L^0_{p}$, $1 \leq p \leq \infty$,  we  consider its  first  difference
   in the  $j${th}   variable  with step  $h \in \mathbb{R}$:
    $$
\Delta_{h,j} f(\boldsymbol{x}) = f(x_1, \ldots, x_{j-1}, x_j + h, x_{j+1}, \ldots, x_d)  - f(\boldsymbol{x})
$$
 and  define its  $l$th difference, $l \in \mathbb{N}$,  by
 $$
\Delta^l_{h,j} f(\boldsymbol{x}) = \overbrace{\Delta_{h,j} \cdots \Delta_{h,j}}\limits^l f(\boldsymbol{x}).
 $$

Assume that the  vectors $\boldsymbol{k} 
\in \mathbb{N}^d$
and  $\boldsymbol{h}
\in \mathbb{R}^d$
are given.  Then the mixed difference  of  order $\boldsymbol{k}$  with  a vector step $\boldsymbol{h}$  is   defined  by  the  equality
$$
 \Delta^{\boldsymbol{k}}_{\boldsymbol{h}} f(\boldsymbol{x}) = \Delta^{k_1}_{h_1, 1} \, \Delta^{k_2}_{h_2, 2} \cdots\, \Delta^{k_d}_{h_d, d}\,  f(\boldsymbol{x}).
$$

The  spaces $B^{\boldsymbol{r}}_{p,\theta}(\mathbb{T}^d)$, $1 \leq p,\theta \leq \infty$,  where 
$\boldsymbol{r} 
\in \mathbb{R}^d$ is  a given vector with the elements $r_j > 0$, $j = 1,\dots,  d$,   are   defined  as  follows:
$$
B^{\boldsymbol{r}}_{p,\theta} := B^{\boldsymbol{r}}_{p,\theta}(\mathbb{T}^d):=  
\{f \in L^0_p\colon \ \|f\|_{B^{\boldsymbol{r}}_{p,\theta}} < \infty \},
$$
and  the respective norm  is  specified  by  
\begin{align}
\|f\|_{B^{\boldsymbol{r}}_{p,\theta}} &:= \Bigg( \int_{\mathbb{T}^d} \|\Delta^{\boldsymbol{k}}_{\boldsymbol{h}} f \|^{\theta}_p\,
\prod\limits^d_{j=1}\frac{d h_j}{h^{1 + r_j{\theta} }_j} \Bigg)^{\frac{1}{\theta}}, \qquad 1 \leq \theta < \infty,
\label{norm_intro}
\\
\|f\|_{H^{\boldsymbol{r}}_p} &\equiv  \|f\|_{B^{\boldsymbol{r}}_{p,\infty}} :=  \sup\limits_{\boldsymbol{h}} \|\Delta^{\boldsymbol{k}}_{\boldsymbol{h}} f\|_p \prod\limits^d_{j=1}h^{-r_j}_j.
\label{norm_intro_infty}
\end{align}

Here  we  assume  that  the  components of the vectors  $\boldsymbol{k}$
and $\boldsymbol{r}$
satisfy  the  conditions  $k_j > r_j$, $1,\dots,  d$.

In this  form,   the  definition of the  spaces $B^{\boldsymbol{r}}_{p,\theta}$, $1\leq\theta<\infty$,  was    given    by  V.N. Temlyakov \cite{Temlyakov1989_121p}  and,  for  $H^{\boldsymbol{r}}_p \equiv B^{\boldsymbol{r}}_{p,\infty}$, by S.M. Nikol'skii  and  P.I.~Lizorkin \cite{Lizorkin_Nikol'skii1989}.  These  spaces  belong  to the  scale   of  spaces  of  mixed  smoothness, introduced  by  S.M. Nikol'skii \cite{Nikol'skii1963}   and  T.I. Amanov \cite{Amanov1965}.  In addition, they   are   generalizations  of the  well-known  isotropic  Besov  spaces \cite{Besov1961},   and  the  Nikol'skii spaces \cite{Nikol'skii1951}  for  the  case $\theta = \infty$.


In what follows, we introduce an equivalent 
norm representation 
for the spaces $B^{\boldsymbol{r}}_{p,\theta}$, which  is  more  convenient   for  calculations.

For  all  vectors  
${\boldsymbol{s}}
\in \mathbb{N}^d$,
we  set
\begin{equation}\label{rho_s}
\rho(\boldsymbol{s}) := \{
\boldsymbol{k}
\in\mathbb{Z}^d \colon \ 2^{s_j - 1} \leq | k_j | < 2^{s_j}, \, j = 1, \dots, d \}.
\end{equation}
For $f \in L^0_p$,   let 
\begin{equation}\label{delta_s}
 \delta_{\boldsymbol{s}}(f) := \delta_{\boldsymbol{s}}(f,\boldsymbol{x}) = \sum\limits_{\boldsymbol{k} \in \rho({\boldsymbol{s}})}\, \widehat{f}(\boldsymbol{k}) e^{i(\boldsymbol{k},\boldsymbol{x})},   
\end{equation}
where  $\widehat{f}(\boldsymbol{k}) = \int_{\mathbb{T}^d}f({\boldsymbol{t}}) e^{-i(\boldsymbol{k},{\boldsymbol{t}})} d{\boldsymbol{t}}$  are    the  Fourier  coefficients   of  the    function  $f$.

First,  let  $1 < p < \infty$.
  Then  we can   define  the  norm of the spaces  $B^{\boldsymbol{r}}_{p,\theta}$,
  $\boldsymbol{r} \in \mathbb{R}^d$,   $r_j > 0$, $j = 1, \dots, d$,
 as  follows (see, e.g., \cite{Lizorkin_Nikol'skii1989}):
\begin{align}\label{eq1}
\|f\|_{B^{\boldsymbol{r}}_{p,\theta}} & \asymp \Bigg( \sum\limits_{{\boldsymbol{s}} \in \mathbb{N}^d} 2^{({\boldsymbol{s}},{\boldsymbol{r}})\theta} \|\delta_{\boldsymbol{s}} (f)\|^{\theta}_p  \Bigg)^{\frac{1}{\theta}}, \quad
  1 \leq \theta < \infty,
\\
\|f\|_{H^{\boldsymbol{r}}_p} & \equiv  \|f\|_{B^{\boldsymbol{r}}_{p,\infty}}\asymp \sup\limits_{{\boldsymbol{s}} \in \mathbb{N}^d} 2^{({\boldsymbol{s}},{\boldsymbol{r}})} \|\delta_{\boldsymbol{s}} (f)\|_p.
 \label{eq1_infty}
\end{align}

Here and  below,  for two non-negative  sequences $\{a(n)\}_{n=1}^\infty$  and $\{b(n)\}_{n=1}^\infty$,   the  notation (order equality)  $a(n) \asymp b(n)$ means  that  there  exist   $C_1, C_2>0$
 that  do  not  depend  on  $n$   and  such  that $C_1 a(n) \leq b(n)$
 (in  this  case,  we  write  $a(n) \ll b(n)$) and  $C_2 a(n) \geq b(n)$   (denoted by $a(n) \gg b(n)$).

 Note  that  after  the  corresponding  modification  of  the ``blocks''  $\delta_{\boldsymbol{s}}(f)$,  the given above
 norm representation  for  the spaces  $B^{\boldsymbol{r}}_{p,\theta}$  can  also  be   generalized  to the  extreme  values  $p=1$  and  $p=\infty$
 (see, e.g., \cite[Remark 2.1]{Lizorkin_Nikol'skii1989}).

 Let   $V_l(t)$, $t\in \mathbb{R}$, $l \in \mathbb{N}$, denotes the  de la Vall\'ee-Poussin kernel  of  the  form
$$
V_l(t) = 1 + 2 \sum\limits^l_{k=1} \cos k t  + 2\sum\limits^{2l - 1}_{k=l+1}\Big( 1 - \frac{k - l}{l} \Big)\cos k t \,,
$$
where for $l=1$ we assume that the third term equals to zero.

We associate  each  vector  ${\boldsymbol{s}}
\in \mathbb{N}^d$
with  the  polynomial
$$
A_{\boldsymbol{s}}(\boldsymbol{x}) =  \prod\limits^d_{j=1}(V_{2^{s_j}}(x_j) - V_{2^{s_j - 1}}(x_j)),
$$
and  for  $f \in L^0_p$, $1 \leq p \leq \infty$,     set
$$
A_{\boldsymbol{s}}(f) := A_{\boldsymbol{s}}(f,\boldsymbol{x}) := (f \ast A_{\boldsymbol{s}})(\boldsymbol{x}),
$$
where ``$\ast$''   denotes  the  operation  of  convolution.   Thus, for $1 \leq p \leq \infty$,  the 
norm of the spaces $B^{\boldsymbol{r}}_{p, \theta}$, $\boldsymbol{r} \in \mathbb{R}^d$,   $r_j > 0$, $j = 1, \dots, d$,
can be defined  as follows:
\begin{align}
\|f\|_{B^{\boldsymbol{r}}_{p,\theta}} &\asymp \Bigg( \sum\limits_{{\boldsymbol{s}} \in \mathbb{N}^d}
2^{({\boldsymbol{s}},{\boldsymbol{r}})\theta} \, \|A_{\boldsymbol{s}}(f)\|^\theta_p \Bigg)^{\frac{1}{\theta}}, 
 \quad 1 \leq \theta < \infty, 
 \label{norm_via_A_s_notinfty}
\\
\|f\|_{H^{\boldsymbol{r}}_p} & \equiv \|f\|_{B^{\boldsymbol{r}}_{p,\infty}} \asymp \sup\limits_{{\boldsymbol{s}} \in \mathbb{N}^d}\, 2^{({\boldsymbol{s}},{\boldsymbol{r}})} \|A_{\boldsymbol{s}}(f)\|_p. 
\label{norm_via_A_s_infty}
\end{align}

We will keep the notation $B^{\boldsymbol{r}}_{p,\theta}$ also for the respective classes (the unit balls in the  spaces $B^{\boldsymbol{r}}_{p,\theta}$). For the classes, we will use the respective definition of the norm from 
(\ref{eq1}), (\ref{eq1_infty}) or (\ref{norm_via_A_s_notinfty}), (\ref{norm_via_A_s_infty}) depending on the value of the parameter~$p$. This should not create any confusion, since we study the asymptotic characteristics of the classes of functions.  These definitions of the norm, as noted, are equivalent to (\ref{norm_intro}) and (\ref{norm_intro_infty}).

For forerunners in the investigation of different  approximation characteristics  of  the  Nikol'skii  and  Nikol'skii-Besov  classes  of  periodic  functions  we refer to
\cite{Dung_Temlyakov_Ullrich2019,Romanyuk2012,Temlyakov1989_121p,Temlyakov1993,Temlyakov2018}, and the references therein. Here the authors comment mainly on the respective asymptotic behaviour. It is worth noting that in some situations also preasymptotic bounds are known, see \cite{Cobos_Kuhn_Sickel_2019}
for the results on the Besov classes of functions, \cite{Dung_Nguyen_2021} for the H\"{o}lder-Nikol’skii classes 
and 
\cite{Chen_Wang2017,Dung_Ullrich2013,Kuehn_Mayer_Ullrich_2016,Kuehn_Sickel_Ullrich2015,Kuehn_Sickel_Ullrich2021} for the Sobolev classes. We want to highlight that in this paper only asymptotic behaviour is studied. 

Note, that with a growth of the parameter $\theta$ the classes  $B^{\boldsymbol{r}}_{p, \theta}$  are expanding, i.e.,
$$
B^{\boldsymbol{r}}_{p,1} \subset B^{\boldsymbol{r}}_{p, \theta_1} \subset B^{\boldsymbol{r}}_{p, \theta_2} \subset B^{\boldsymbol{r}}_{p, \infty} \equiv H^{\boldsymbol{r}}_p, \quad 1 \leq \theta_1 \leq \theta_2 \leq \infty.
$$

We assume that coordinates of the vector
 ${\boldsymbol{r}}
\in  \mathbb{R}^d$, as the parameter of the defined classes, are ordered such that
  $0 < r_1 = r_2 = \dots  =  r_\nu < r_{\nu +1} \leq \dots \leq r_d$, and also that ${\boldsymbol{{\boldsymbol{\gamma}}}} 
\in  \mathbb{R}^d  $ is a vector with the coordinates ${\gamma}_j = r_j/r_1$, 
  $j = 1,\dots, d$. Besides, ${\boldsymbol{{\boldsymbol{\gamma}}}}' 
\in  \mathbb{R}^d  $,  where $\gamma'_j = \gamma_j=1$  if  $j = 1,\dots, \nu$  and $1 < \gamma'_j < \gamma_j$  if 
$j = \nu +1, \dots, d$.

In what follows, we define the norm $\|\cdot\|_{B_{q,1}}$ in the subspaces $B_{q,1}\subset L^0_q$,  $1 \leq q \leq \infty$.

For trigonometric polynomials $t$ with respect to the trigonometric system  $\{ e^{i(\boldsymbol{k},\boldsymbol{x})}\}_{\boldsymbol{k} \in \mathbb{Z}^d}$, the norm 
$
\|t\|_{B_{q,1}} := \sum_{{\boldsymbol{s}} \in \mathbb{N}^d} \|A_{\boldsymbol{s}}(t)\|_q$, $1 \leq  q \leq \infty$ (the sum contains a finite number of terms).

Similarly we define the norm 
 for functions $f \in L^0_q$, such that the series
 $\sum_{{\boldsymbol{s}} \in \mathbb{N}^d}\|A_{\boldsymbol{s}}(f)\|_q$  is convergent:
\begin{equation}\label{norm_Bq1_via_As}
    \|f\|_{B_{q,1}} := \sum\limits_{{\boldsymbol{s}} \in \mathbb{N}^d} \|A_{\boldsymbol{s}}(f)\|_q, \quad 1 \leq  q \leq \infty.
\end{equation}

Note, that in the case $1 < q < \infty$ it holds
\begin{equation}\label{norm_Bq1}
  \|f\|_{B_{q,1}} \asymp \sum\limits_{{\boldsymbol{s}} \in \mathbb{N}^d}\|\delta_{\boldsymbol{s}}(f)\|_q.  
\end{equation}

For $f \in B_{q,1}$, $1 \leq q \leq \infty$, it holds
$$
\|f\|_q \ll \|f\|_{B_{q,1}}; \qquad \|f\|_{B_{1,1}} \ll \|f\|_{B_{q,1}} \ll \|f\|_{B_{\infty, 1}}.
$$

The space $B_{q,1}$ is sometimes called the ``zero'' Besov space (when the smoothness parameter $\boldsymbol{r}$ equals $\boldsymbol{0}=(0,\dots,0)\in \mathbb{R}^d$). Besides, it was shown in \cite{Cobos_Kuhn_Sickel_2019} that the space $B_{\infty,1}$ can replace the space $L_\infty$ without changing the associated approximation numbers for the embeddings of the periodic isotropic   Sobolev spaces and Sobolev spaces of functions with dominating mixed smoothness.

\section{Approximation by the step hyperbolic Fourier sums and \newline  best approximation}\label{sec2:App_shFs}

First, we give  definitions of the approximation characteristics, which are investigated in this part of the paper.

For   $n \in \mathbb{N}$, ${\boldsymbol{s}} \in \mathbb{N}^d$  and ${\boldsymbol{\gamma}}
\in \mathbb{R}^d$,
 $\gamma_j > 0$, $j = 1,\dots, d$,  we put
$$
Q^{\boldsymbol{\gamma}}_n := \bigcup\limits_{({\boldsymbol{s}},{\boldsymbol{\gamma}}) < n}\, \rho({\boldsymbol{s}}),
$$
where $\rho({\boldsymbol{s}})$ is defined by (\ref{rho_s}).
The set $Q^{{{\boldsymbol{\gamma}}}}_n$ is called the step hyperbolic cross.
In the case ${{\boldsymbol{\gamma}}}= (1, \ldots, 1) :=\boldsymbol{1} \in \mathbb{N}^d$, we use the notation $Q^{\boldsymbol{1}}_n$.

We consider the set of trigonometric polynomials
$$
T(Q^{{\boldsymbol{\gamma}}}_n):= \Big\{ t\colon \ t(\boldsymbol{x}) = \sum\limits_{\boldsymbol{k} \in Q^{{\boldsymbol{\gamma}}}_n} c_{\boldsymbol{k}} e^{i(\boldsymbol{k},\boldsymbol{x})}, \, c_{\boldsymbol{k}} \in \mathbb{C}, \ \boldsymbol{x} \in \mathbb{R}^d \Big\}
$$
and
for $f \in L^0_1$ define
$$
S_{Q^{{\boldsymbol{\gamma}}}_n}(f) := S_{Q^{{\boldsymbol{\gamma}}}_n}(f,\boldsymbol{x}) := \sum\limits_{\boldsymbol{k} \in Q^{{\boldsymbol{\gamma}}}_n} \widehat{f}(\boldsymbol{k})\, e^{i(\boldsymbol{k},\boldsymbol{x})}, \quad
 \boldsymbol{x} \in \mathbb{R}^d,
$$
where, recall, $\widehat{f}(\boldsymbol{k})$ are the Fourier coefficients of the function $f$. 

The polynomial $S_{Q^{{\boldsymbol{\gamma}}}_n}(f)$ is called 
the step hyperbolic Fourier sum of the function $f$.
According to 
(\ref{delta_s}), it can be written in the form
$$
S_{Q^{{\boldsymbol{\gamma}}}_n}(f) = 
\sum\limits_{({\boldsymbol{s}}, {\boldsymbol{\gamma}})< n}\delta_{\boldsymbol{s}}(f).
$$

In connection to the above defined sets of polynomials, we will consider the following approximation characteristics.

 Let $\mathscr{X}$ be a $d$-dimensional, $d\geq 1$, function space defined on $\mathbb{T}^d$ equipped with the norm $\|\cdot\|_{\mathscr{X}}$. Then, for $f \in \mathscr{X}$, by
 $$
 E_{Q^{{\boldsymbol{{\boldsymbol{\gamma}}}}}_n}(f)_{\mathscr{X}} := \inf\limits_{t \in T(Q^{{\boldsymbol{{\boldsymbol{\gamma}}}}}_n)} \| f - t \|_{\mathscr{X}}
 $$
 we define a quantity of the best approximation of the function $f$ by polynomials from the set  $T(Q^{{\boldsymbol{{\boldsymbol{\gamma}}}}}_n)$.

Respectively, for the function class $F\subset \mathscr{X}$,  we set
 \begin{equation}\label{eqno2}
  E_{Q^{{\boldsymbol{{\boldsymbol{\gamma}}}}}_n} (F)_{\mathscr{X}} := \sup\limits_{f \in F}E_{Q^{{\boldsymbol{{\boldsymbol{\gamma}}}}}_n}(f)_{\mathscr{X}}.
\end{equation}
Along with the quantity (\ref{eqno2}), we will investigate the approximation characteristic
 \begin{equation}\label{eqno3}
\mathscr{E}_{Q^{{\boldsymbol{{\boldsymbol{\gamma}}}}}_n}(F)_{\mathscr{X}} := \sup\limits_{f \in F}\, \|f - S_{Q^{{\boldsymbol{{\boldsymbol{\gamma}}}}}_n}(f) \|_{\mathscr{X}}.
\end{equation}

An investigation of the approximation properties of (\ref{eqno2}) and (\ref{eqno3}) on the Sobolev classes $W^r_{p, \alpha}$ and Nikol'skii-Besov classes  $B^{\boldsymbol{r}}_{p,\theta}$  in the case $\mathscr{X} = L_q$ has a reach history. We refer to the monographs  \cite{Dung_Temlyakov_Ullrich2019,Romanyuk2012,Temlyakov1989_121p,Temlyakov1993,Temlyakov2018}.

The aim of this part of the paper is to get estimates for the quantities  (\ref{eqno2}) and (\ref{eqno3})  in the case $F = B^{\boldsymbol{r}}_{p, \theta}$   and $\mathscr{X} = B_{q,1}$.

First, we note the existing connection between the quantities  $E_{Q^{{\boldsymbol{{\boldsymbol{\gamma}}}} }_n}(f)_{B_{q,1}}$  and $\mathscr{E}_{Q^{{\boldsymbol{{\boldsymbol{\gamma}}}} }_n}(f)_{B_{q,1}}$.

On the one hand, immediately from the definitions (\ref{eqno2}) and (\ref{eqno3}), we obtain that for any function
$f \in B_{q,1}$ it holds
 \begin{equation}\label{eqno6}
E_{Q^{{\boldsymbol{{\boldsymbol{\gamma}}}}}_n}(f)_{B_{q,1}} \leq \mathscr{E}_{Q^{{\boldsymbol{{\boldsymbol{\gamma}}}}}_n}(f)_{B_{q,1}}, \qquad 1\leq q\leq \infty.
\end{equation}
Further we show, that in the case $ 1 < q < \infty$ it also holds that
\begin{equation*}
 \mathscr{E}_{Q^{{\boldsymbol{{\boldsymbol{\gamma}}}}}_n}(f)_{B_{q,1}} \ll E_{Q^{{\boldsymbol{{\boldsymbol{\gamma}}}}}_n}(f)_{B_{q,1}}.
\end{equation*}

Let us consider
the 
operator $S_{Q^{{\boldsymbol{{\boldsymbol{\gamma}}}}}_n}$ that puts into a correspondence to the function  $f \in B_{q,1}$, $1 < q < \infty$, its step hyperbolic Fourier sum  $ S_{Q^{{\boldsymbol{{\boldsymbol{\gamma}}}}}_n}(f)$.
We 
show boundedness of the norm of this operator. 

Indeed, due to (\ref{norm_Bq1}), we have
\begin{align}
\|S_{Q^{{\boldsymbol{{\boldsymbol{\gamma}}}}}_n} \|_{B_{q,1} \rightarrow B_{q,1}} &=
\sup \limits_{\|f\|_{B_{q,1}} \leq 1} \|  S_{Q^{{\boldsymbol{\gamma}}}_n} (f)\|_{B_{q,1}} = \sup \limits_{\|f\|_{B_{q,1}} \leq 1}  \sum\limits_{{\boldsymbol{s}} \in \mathbb{N}^d} \| \delta_{\boldsymbol{s}} (S_{Q^{{\boldsymbol{\gamma}}}_n} (f))\|_q 
\nonumber\\ 
&= \sup \limits_{\|f\|_{B_{q,1}} \leq 1}  \sum\limits_{({\boldsymbol{s}}, {\boldsymbol{\gamma}}) < n} \| \delta_{\boldsymbol{s}} (f)\|_q \leq \sup \limits_{\|f\|_{B_{q,1}} \leq 1}  \sum\limits_{{\boldsymbol{s}} \in \mathbb{N}^d} \| \delta_{\boldsymbol{s}} (f)\|_q 
\nonumber\\ 
& \asymp \sup \limits_{\|f\|_{B_{q,1}} \leq 1} \|f\|_{B_{q,1}} \leq 1.
\label{eqno4}
\end{align}

Further, let $t^{\ast} \in T(Q^{{\boldsymbol{\gamma}}}_n)$ be a polynomial of the best approximation of the function $f \in B_{q,1}$ in the space $B_{q, 1}$. Then, taking into account the fact that  $S_{Q^{{\boldsymbol{\gamma}}}_n}(t^{\ast}) = t^{\ast}$  and using (\ref{eqno4}), we can write
\begin{align}
\mathscr{E}_{Q^{{\boldsymbol{\gamma}}}_n}(f)_{B_{q,1}} &= \| f - S_{Q^{{\boldsymbol{\gamma}}}_n} (f) \|_{B_{q,1}} 
= \| f - t^{\ast} + t^{\ast} - S_{Q^{{\boldsymbol{\gamma}}}_n} (f) \|_{B_{q,1}} 
\nonumber\\ 
&\leq \| f - t^{\ast}\|_{B_{q,1}} + \| S_{Q^{{\boldsymbol{\gamma}}}_n} (f) - t^{\ast} \|_{B_{q,1}} 
 \nonumber\\
&= \| f - t^{\ast}\|_{B_{q,1}}  + \| S_{Q^{{\boldsymbol{\gamma}}}_n} (f - t^{\ast}) \|_{B_{q,1}} 
\nonumber\\ 
&\leq E_{Q^{{\boldsymbol{\gamma}}}_n}(f)_{B_{q,1}} +  \| {S}_{Q^{{\boldsymbol{\gamma}}}_n}\|_{B_{q,1}\rightarrow B_{q,1}
} \|f - t^{\ast}\|_{B_{q,1}}
\nonumber\\ 
&\leq  E_{Q^{{\boldsymbol{\gamma}}}_n}(f)_{B_{q,1}} + C_3(q)\cdot E_{Q^{{\boldsymbol{\gamma}}}_n}(f)_{B_{q,1}} =(1+ C_3(q) )\cdot E_{Q^{{\boldsymbol{\gamma}}}_n}(f)_{B_{q,1}}.
\label{eqno5}
\end{align}

Combining (\ref{eqno6}) and (\ref{eqno5}), we get
 \begin{equation*}
E_{Q^{{\boldsymbol{\gamma}}}_n}(f)_{B_{q,1}} \asymp \mathscr{E}_{Q^{{\boldsymbol{\gamma}}}_n}(f)_{B_{q,1}}, \quad 1<q<\infty.
\end{equation*}


Let us formulate an auxiliary statement that we will use multiple times in the proofs.

{\bf{Lemma A.}} \cite[
Lemma B, p. 11]{Temlyakov1989_121p} {\it
Let ${\boldsymbol{s}}\in \mathbb{N}^d$, ${\boldsymbol{\gamma}} 
\in\mathbb{R}^d$, $\gamma_j>0$, $j=1,\dots, d$.
Then
for $\alpha > 0$, the following estimate holds:
$$
\sum\limits_{(s, {\boldsymbol{\gamma}}) \geq l} 2^{- \alpha(s, {\boldsymbol{\gamma}})} \asymp 2^{- \alpha l} l^{d-1}.
$$
If ${\boldsymbol{\gamma}}'
\in\mathbb{R}^d$
is such that ${\gamma}_j={\gamma}_j'=1$ for $j=1,\dots,\nu$ and $1<\gamma_j'<\gamma_j$ for $j=\nu+1, \dots, d$. Then
for $\alpha > 0$, it holds:
$$
\sum\limits_{({\boldsymbol{s}}, {\boldsymbol{\gamma}}') \geq l} 2^{- \alpha({\boldsymbol{s}}, {\boldsymbol{\gamma}})} \asymp 2^{- \alpha l} l^{\nu-1}.
$$
}

\textbf{Theorem A.}
 {\it Let
$$
t(\boldsymbol{x}) = \sum\limits_{|k_j| \leq n_j} c_{\boldsymbol{k}} e^{i(\boldsymbol{k},\boldsymbol{x})},
$$
where 
$\boldsymbol{x}\in\mathbb{R}^d$, $\boldsymbol{k}\in\mathbb{Z}^d$, $\boldsymbol{n}\in\mathbb{N}^d$, $c_{\boldsymbol{k}} \in \mathbb{C}$. 
Then  for $1 \leq p < q  \leq \infty$   the following inequality holds:
\begin{equation}\label{eqno16}
\|t\|_q \leq 2^d \prod\limits^d_{j=1} n_j^{\frac{1}{p} - \frac{1}{q}} \|t\|_p.
\end{equation}
}

Inequality (\ref{eqno16})  was obtained by S.M. Nikol'skii \cite{Nikol'skii1951}    and is referred to as the inequality for different metrics.

Now we move to 
a step-by-step proof of the results (see Remark \ref{summung_up_best_approx} for the general statement).
For the details on parameters contained in  the definitions of the function classes (e.g., $\boldsymbol{r}, \boldsymbol{\gamma}, \boldsymbol{\gamma}', \nu$)  were refer to Section \ref{sec1:FCandS}. 

We begin with the exact order estimates for the quantities $\mathscr{E}_{Q^{{\boldsymbol{\gamma}}}_n}(B^{\boldsymbol{r}}_{p,\theta})_{B_{q,1}}$ and $E_{Q^{{\boldsymbol{\gamma}}}_n}(B^{\boldsymbol{r}}_{p,\theta})_{B_{q,1}}$   in the case $1 < p < q < \infty$.

\begin{theorem}\label{Thm2}
Let $d \geq 2$,  $1 < p < q < \infty$, $1 \leq  \theta \leq\ \infty$.   Then for 
$r_1 > 1/p-1/q$
the following estimates hold
\begin{equation}\label{eqno17}
\mathscr{E}_{Q^{{\boldsymbol{\gamma}}}_n} ( B^{\boldsymbol{r}}_{p,\theta})_{B_{q,1}}    \asymp  E_{Q^{{\boldsymbol{\gamma}}}_n} ( B^{\boldsymbol{r}}_{p,\theta})_{B_{q,1}}   \asymp  2^{ -n( r_1 - \frac{1}{p} + \frac{1}{q})} n^{(\nu - 1)(1 - \frac{1}{\theta})}.
\end{equation}
\end{theorem}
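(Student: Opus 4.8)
By the equivalence $\mathscr{E}_{Q^{\boldsymbol{\gamma}}_n}(f)_{B_{q,1}} \asymp E_{Q^{\boldsymbol{\gamma}}_n}(f)_{B_{q,1}}$ established above for $1<q<\infty$, it suffices to bound $\mathscr{E}_{Q^{\boldsymbol{\gamma}}_n}(B^{\boldsymbol{r}}_{p,\theta})_{B_{q,1}}$ from above and $E_{Q^{\boldsymbol{\gamma}}_n}(B^{\boldsymbol{r}}_{p,\theta})_{B_{q,1}}$ from below by the stated order. Since $\nu$, $\boldsymbol{\gamma}$ refer to the ordering $r_1=\dots=r_\nu<r_{\nu+1}\le\dots$, the exponent $2^{-n(r_1-1/p+1/q)}n^{(\nu-1)(1-1/\theta)}$ is exactly what one expects from summing a geometric-type series over the shells $(\boldsymbol{s},\boldsymbol{\gamma})\ge n$.

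\textbf{Upper bound.} For $f\in B^{\boldsymbol{r}}_{p,\theta}$ write, using \eqref{norm_Bq1},
$$
\mathscr{E}_{Q^{\boldsymbol{\gamma}}_n}(f)_{B_{q,1}} \asymp \sum_{(\boldsymbol{s},\boldsymbol{\gamma})\ge n}\|\delta_{\boldsymbol{s}}(f)\|_q.
$$
Apply Nikol'skii's inequality for different metrics (Theorem A) to each block $\delta_{\boldsymbol{s}}(f)$, which is a polynomial with harmonics in $\rho(\boldsymbol{s})$, to get $\|\delta_{\boldsymbol{s}}(f)\|_q \ll 2^{(\boldsymbol{s},\boldsymbol{1})(1/p-1/q)}\|\delta_{\boldsymbol{s}}(f)\|_p$. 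Then insert and remove the weight $2^{(\boldsymbol{s},\boldsymbol{r})}$: the sum becomes
$$
\sum_{(\boldsymbol{s},\boldsymbol{\gamma})\ge n} 2^{(\boldsymbol{s},\boldsymbol{1})(1/p-1/q)-(\boldsymbol{s},\boldsymbol{r})}\cdot 2^{(\boldsymbol{s},\boldsymbol{r})}\|\delta_{\boldsymbol{s}}(f)\|_p.
$$
For $\theta=1$ this is controlled directly since $2^{(\boldsymbol{s},\boldsymbol{1})(1/p-1/q)-(\boldsymbol{s},\boldsymbol{r})}\le 2^{-n(r_1-1/p+1/q)}$ on the shell $(\boldsymbol{s},\boldsymbol{\gamma})\ge n$ (using $r_j/r_1=\gamma_j$ and $r_1>1/p-1/q$) and $\sum 2^{(\boldsymbol{s},\boldsymbol{r})}\|\delta_{\boldsymbol{s}}(f)\|_p\le\|f\|_{B^{\boldsymbol{r}}_{p,1}}\le 1$; the $n^{\nu-1}$ factor is absent since $(\nu-1)(1-1/\theta)=0$. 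For $\theta>1$, apply Hölder's inequality in $\boldsymbol{s}$ with exponents $\theta,\theta'$ to split off $\big(\sum 2^{(\boldsymbol{s},\boldsymbol{r})\theta}\|\delta_{\boldsymbol{s}}(f)\|_p^\theta\big)^{1/\theta}\le 1$, leaving $\big(\sum_{(\boldsymbol{s},\boldsymbol{\gamma})\ge n} 2^{\theta'[(\boldsymbol{s},\boldsymbol{1})(1/p-1/q)-(\boldsymbol{s},\boldsymbol{r})]}\big)^{1/\theta'}$; this is a sum of the form $\sum_{(\boldsymbol{s},\boldsymbol{\gamma})\ge n}2^{-\alpha(\boldsymbol{s},\boldsymbol{\gamma})}$ with $\alpha=\theta'(r_1-1/p+1/q)>0$, and Lemma A (first statement, with the roles of $\boldsymbol{\gamma}$ and the exponent matched so that equal smallest components contribute) gives $\asymp 2^{-\alpha n}n^{\nu-1}$, whence the $\theta'$-th root yields the claimed $2^{-n(r_1-1/p+1/q)}n^{(\nu-1)/\theta'}$. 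One should be careful that the relevant counting exponent in Lemma A is governed by the number $\nu$ of coordinates where $\gamma_j$ attains its minimum $1$, i.e.\ where $r_j=r_1$; matching this against the statement of Lemma A is the one genuinely delicate bookkeeping point.

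\textbf{Lower bound.} Use a single test function supported on one dyadic shell far out in the cross. Concretely, for $m$ with $m\ge n$ pick $\boldsymbol{s}^{0}$ with $(\boldsymbol{s}^0,\boldsymbol{\gamma})\approx m$ and $s^0_j$ large only on the first $\nu$ coordinates, and take $f=c\,2^{-(\boldsymbol{s}^0,\boldsymbol{r})} 2^{-(\boldsymbol{s}^0,\boldsymbol{1})(1/p-1/q)\cdot 0}\cdot(\text{a rank-one-in-each-variable polynomial in }\rho(\boldsymbol{s}^0))$ normalized so $\|f\|_{B^{\boldsymbol{r}}_{p,\theta}}\le 1$; a standard choice is a product of one-dimensional Dirichlet- or Rudin–Shapiro-type kernels giving $\|\delta_{\boldsymbol{s}^0}(f)\|_p\asymp 2^{(\boldsymbol{s}^0,\boldsymbol{1})/p}$ times the normalizing constant and $\|\delta_{\boldsymbol{s}^0}(f)\|_q\asymp 2^{(\boldsymbol{s}^0,\boldsymbol{1})/q}$ times the same constant. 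Then $\delta_{\boldsymbol{s}^0}(f)$ is annihilated by $S_{Q^{\boldsymbol{\gamma}}_n}$, so $E_{Q^{\boldsymbol{\gamma}}_n}(f)_{B_{q,1}}\gg\|\delta_{\boldsymbol{s}^0}(f)\|_q\asymp 2^{-m(r_1-1/p+1/q)}$. To recover the logarithmic factor $n^{(\nu-1)(1-1/\theta)}$ for $\theta>1$, instead superpose such blocks over all $\boldsymbol{s}$ on the single layer $(\boldsymbol{s},\boldsymbol{\gamma})=n$ supported in the first $\nu$ coordinates (there are $\asymp n^{\nu-1}$ of them), with coefficients chosen optimally: put $\delta_{\boldsymbol{s}}(f)=a_{\boldsymbol{s}}\,g_{\boldsymbol{s}}$ with $g_{\boldsymbol{s}}$ as above; the constraint $\sum_{(\boldsymbol{s},\boldsymbol{\gamma})=n}(2^{(\boldsymbol{s},\boldsymbol{r})}a_{\boldsymbol{s}}2^{(\boldsymbol{s},\boldsymbol{1})/p})^\theta\ll 1$ and the goal of maximizing $\sum a_{\boldsymbol{s}}2^{(\boldsymbol{s},\boldsymbol{1})/q}$ is, since $2^{(\boldsymbol{s},\boldsymbol{r})}\asymp 2^{r_1 n}$ and $2^{(\boldsymbol{s},\boldsymbol{1})}\asymp 2^{n/\gamma_1}\cdot(\text{bounded})$ are essentially constant over this layer, a finite-dimensional $\ell_\theta$–$\ell_1$ extremal problem on $\asymp n^{\nu-1}$ coordinates whose solution gives the extra $n^{(\nu-1)(1-1/\theta)}=n^{(\nu-1)/\theta'}$. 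Matching the $B_{q,1}$-norm lower bound $\|f\|_{B_{q,1}}\ge\sum\|\delta_{\boldsymbol{s}}(f)\|_q$ (no loss here, since $B_{q,1}$ is defined exactly by this sum) then closes the gap.

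\textbf{Main obstacle.} The routine part is Nikol'skii's inequality plus Lemma A; the delicate part is (i) correctly identifying, in Lemma A, that the power of $n$ is $\nu-1$ (determined by how many $r_j$ equal $r_1$, equivalently how many $\gamma_j=1$), and keeping this consistent between the two halves of the argument, and (ii) constructing the extremal function for the lower bound so that it simultaneously lies in the unit ball of $B^{\boldsymbol{r}}_{p,\theta}$, is orthogonal to $T(Q^{\boldsymbol{\gamma}}_n)$, and saturates the $B_{q,1}$-norm with the right logarithmic power — this requires the $\ell_\theta$–$\ell_1$ optimization over the top layer rather than a single block.
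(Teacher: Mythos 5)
Your proposal follows the paper's own proof in all essentials: the upper bound via Nikol'skii's inequality for the blocks $\delta_{\boldsymbol{s}}(f)$, H\"older's inequality in $\boldsymbol{s}$, and Lemma A; the lower bound via a superposition of Dirichlet-kernel blocks over the outermost layer. Indeed the paper's extremal function is $g=C\,2^{-n(r_1+1-1/p)}n^{-(d-1)/\theta}\sum_{(\boldsymbol{s},\boldsymbol{1})=n}\sum_{\boldsymbol{k}\in\rho(\boldsymbol{s})}e^{i(\boldsymbol{k},\boldsymbol{x})}$ (after reducing to $\nu=d$), which is exactly the equal-weight solution of the $\ell_\theta$--$\ell_1$ extremal problem you describe, and it is annihilated by $S_{Q^{\boldsymbol{1}}_n}$ as you require.

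Two points need correcting. (i) The ``delicate bookkeeping'' you flag in Lemma A is resolved in the paper by introducing $\widetilde{\gamma}_j=(r_j-1/p+1/q)/(r_1-1/p+1/q)$: the remainder sum after H\"older is $\sum_{(\boldsymbol{s},\boldsymbol{\gamma})\ge n}2^{-(\boldsymbol{s},\widetilde{\boldsymbol{\gamma}})(r_1-\frac1p+\frac1q)\theta'}$, and this equals your $\sum 2^{-\alpha(\boldsymbol{s},\boldsymbol{\gamma})}$ only when $\nu=d$. For $\nu<d$ one checks $\widetilde{\gamma}_j=\gamma_j=1$ for $j\le\nu$ and $\widetilde{\gamma}_j>\gamma_j>1$ for $j>\nu$, so it is the \emph{second} statement of Lemma A (with $\boldsymbol{\gamma}$ in the role of $\boldsymbol{\gamma}'$ and $\widetilde{\boldsymbol{\gamma}}$ in the role of $\boldsymbol{\gamma}$) that produces $2^{-\alpha n}n^{\nu-1}$; the first statement, applied as you write it, would give $n^{d-1}$. (ii) In the lower bound, the block norms are off: for the full Dirichlet-type block one has $\|\sum_{\boldsymbol{k}\in\rho(\boldsymbol{s})}e^{i(\boldsymbol{k},\boldsymbol{x})}\|_p\asymp 2^{(\boldsymbol{s},\boldsymbol{1})(1-\frac1p)}$, not $2^{(\boldsymbol{s},\boldsymbol{1})/p}$. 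The sign matters: only with the correct exponent does the ratio $\|\cdot\|_q/\|\cdot\|_p\asymp 2^{(\boldsymbol{s},\boldsymbol{1})(\frac1p-\frac1q)}$ upgrade $2^{-nr_1}$ to the required $2^{-n(r_1-\frac1p+\frac1q)}$; with your exponents the gain would point the wrong way and the bound would be too small. Likewise a Rudin--Shapiro-type block, whose $L_p$ norms are all $\asymp 2^{(\boldsymbol{s},\boldsymbol{1})/2}$, loses the factor $2^{n(\frac1p-\frac1q)}$ entirely and cannot be used here, so that alternative should be dropped. With these two repairs your argument coincides with the paper's.
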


\begin{proof}
To get the upper estimates in (\ref{eqno17}), it is sufficient, due to (\ref{eqno6}), to respectively estimate the quantity  $\mathscr{E}_{Q^{{\boldsymbol{\gamma}}}_n} ( B^{\boldsymbol{r}}_{p,\theta})_{B_{p,1}} $. 

Let $f$ be a function from the class $B^{\boldsymbol{r}}_{p,\theta}$.  Then, from the equivalent representation (\ref{norm_Bq1}) of the norm in the space $B_{q,1}$, we have
\begin{align}
\mathscr{E}_{Q^{{\boldsymbol{\gamma}}}_n} (f)_{B_{q,1}} &= \Big\| f - \sum\limits_{({\boldsymbol{s}}, {\boldsymbol{\gamma}}) < n} \delta_{\boldsymbol{s}}(f) \Big\|_{B_{q,1}} =   \Big\|  \sum\limits_{({\boldsymbol{s}}, {\boldsymbol{\gamma}}) \geq n} \delta_{\boldsymbol{s}}(f) \Big\|_{B_{q,1}}
\nonumber\\ 
&\asymp \sum\limits_{{\boldsymbol{s}} \in \mathbb{N}^d} \Bigg\|\delta_{\boldsymbol{s}} \Bigg(  \mathop{\sum_{{\boldsymbol{s}}' \in \mathbb{N}^d}}\limits_{
({\boldsymbol{s}}', {\boldsymbol{\gamma}}) \geq n} \delta_{{\boldsymbol{s}}'}(f)\Bigg) 
\Bigg\|_q \leq  \sum\limits_{({\boldsymbol{s}}, {\boldsymbol{\gamma}}) \geq n}
\| \delta_{\boldsymbol{s}}(f) \|_q
 =: J_1.
 \label{eqno18}
\end{align}

To further estimate the quantity $J_1$, let us distinguish three cases depending on the value of the parameter $\theta$.

Case $\theta = 1$. Then, taking into account Theorem A and the fact that $({\boldsymbol{s}},{\boldsymbol{1}})\leq ({\boldsymbol{s}},{\boldsymbol{\gamma}})$, we can write
\begin{align*}
J_1 & \leq \sum\limits_{({\boldsymbol{s}},{\boldsymbol{\gamma}} )\geq n}  2^{-({\boldsymbol{s}},{\boldsymbol{\gamma}})r_1}
 2^{ ({\boldsymbol{s}},{\boldsymbol{1}}) (\frac{1}{p} - \frac{1}{q})} 2^{({\boldsymbol{s}},{\boldsymbol{r}})}\|\delta_{\boldsymbol{s}}(f)\|_p
\nonumber\\ 
&\leq \sum\limits_{({\boldsymbol{s}},{\boldsymbol{\gamma}} )\geq n} 2^{-({\boldsymbol{s}},{\boldsymbol{\gamma}})(r_1 - \frac{1}{p} + \frac{1}{q})} 2^{({\boldsymbol{s}},{\boldsymbol{r}})}  \|\delta_{\boldsymbol{s}}(f)\|_p
\nonumber\\ 
&
\leq 2^{-n (r_1 - \frac{1}{p} + \frac{1}{q}) } \sum\limits_{(\boldsymbol{s},{\boldsymbol{\gamma}} )\geq n} 2^{(\boldsymbol{s},\boldsymbol{r})}  \|\delta_{\boldsymbol{s}}(f)\|_p 
\nonumber\\ 
&
\leq
 2^{-n (r_1 - \frac{1}{p} + \frac{1}{q}) } \|f\|_{B^{\boldsymbol{r}}_{p,1}}
\leq
 2^{-n (r_1 - \frac{1}{p} + \frac{1}{q}) } .
\end{align*}

 Case $ 1 < \theta < \infty$. Using the H\"{o}lder's inequality with the power $\theta$ to $J_1$, we get
 \begin{align}
 J_1 &\leq \sum\limits_{({\boldsymbol{s}},{\boldsymbol{\gamma}} )\geq n} 2^{-(({\boldsymbol{s}},{\boldsymbol{r}}) - ({\boldsymbol{s}},{\boldsymbol{1}}) (\frac{1}{p} - \frac{1}{q}))} 2^{({\boldsymbol{s}},{\boldsymbol{r}})} \|\delta_{\boldsymbol{s}}(f)\|_p
\nonumber \\ 
&
 \leq \Bigg(   \sum\limits_{({\boldsymbol{s}},{\boldsymbol{\gamma}} )\geq n} 2^{( {\boldsymbol{s}},{\boldsymbol{r}})\theta} \|\delta_{\boldsymbol{s}}(f)\|^{\theta}_p \Bigg)^{\frac{1}{\theta}} \Bigg( \sum\limits_{({\boldsymbol{s}},{\boldsymbol{\gamma}} )\geq n} 2^{-(({\boldsymbol{s}},{\boldsymbol{r}}) - ({\boldsymbol{s}},{\boldsymbol{1}}) (\frac{1}{p} - \frac{1}{q}))\theta' } \Bigg)^{\frac{1}{\theta'}}
\nonumber \\ 
&
 \ll    \|f\|_{B^{\boldsymbol{r}}_{p,\theta}} \, \Bigg( \sum\limits_{({\boldsymbol{s}},{\boldsymbol{\gamma}} )\geq n} 2^{-({\boldsymbol{s}},{\boldsymbol{r}} - {\boldsymbol{1}}(\frac{1}{p} + \frac{1}{q}))\theta' } \Bigg)^{\frac{1}{\theta'}}
 \leq \Bigg( \sum\limits_{({\boldsymbol{s}},{\boldsymbol{\gamma}} )\geq n} 2^{-({\boldsymbol{s}},\widetilde{{\boldsymbol{\gamma}}})(r_1 - \frac{1}{p} + \frac{1}{q})\theta' } \Bigg)^{\frac{1}{\theta'}},
 \label{eqno19}
\end{align}
where 
$\widetilde{{\boldsymbol{\gamma}}}
 \in \mathbb{R}^d$ is such that
 $\widetilde{\gamma}_j = (r_j - 1/p + 1/q)/(r_1 - 1/p + 1/q)$, $j=1, \dots, d$.
  
   One can easily check that
   $\widetilde{\gamma}_j = {\gamma}_j=1$  for $j =1, \dots, \nu$  and $1 < {\gamma}_j < \widetilde{{\gamma}}_j$ for $j = \nu +1, \dots, d$.
From this, using Lemma A for the last sum in (\ref{eqno19}), we obtain
\begin{equation*}
J_1 \ll 2^{ -n( r_1 - \frac{1}{p} + \frac{1}{q})} n^{(\nu - 1)(1 - \frac{1}{\theta})}.
\end{equation*}

Case $\theta = \infty$.  
Taking into account, that for    $f \in B^{\boldsymbol{r}}_{p,\infty}$, $1 < p < \infty$, it holds  (see (\ref{eq1_infty}))
\begin{equation}\label{norm_delta_infty}
  \|\delta_{\boldsymbol{s}}(f)\|_p \ll 2^{-({\boldsymbol{s}},{\boldsymbol{r}})}, \quad {\boldsymbol{s}} \in \mathbb{N}^d,  
\end{equation}
using Lemma A, we have that
\begin{align}
J_1 & \ll \sum\limits_{({\boldsymbol{s}},{\boldsymbol{\gamma}} )\geq n} 2^{-(({\boldsymbol{s}},{\boldsymbol{r}}) - ({\boldsymbol{s}},{\boldsymbol{1}}) (\frac{1}{p} - \frac{1}{q}))} =
\sum\limits_{({\boldsymbol{s}},{\boldsymbol{\gamma}} )\geq n} 2^{-({\boldsymbol{s}},{\boldsymbol{r}} - \boldsymbol{1}(\frac{1}{p} + \frac{1}{q}))}
\nonumber\\ 
&
= \sum\limits_{({\boldsymbol{s}},{\boldsymbol{\gamma}} )\geq n} 2^{-({\boldsymbol{s}},\widetilde{{\boldsymbol{\gamma}}})(r_1 - \frac{1}{p} + \frac{1}{q})} \ll  2^{-n (r_1 - \frac{1}{p} + \frac{1}{q}) } n^{\nu -1}.
\label{eqno21}
\end{align}

Combining (\ref{eqno18}) -- (\ref{eqno21}), we obtain the upper estimate for the quantity
 $\mathscr{E}_{Q^{{\boldsymbol{\gamma}}}_n} ( B^{\boldsymbol{r}}_{p,\theta})_{B_{q,1}} $, and hence for $E_{Q^{{\boldsymbol{\gamma}}}_n} ( B^{\boldsymbol{r}}_{p,\theta})_{B_{q,1}} $.

It is sufficient to get the lower estimate in (\ref{eqno17}) for the case
$\nu = d$. Below we construct a function that realizes the obtained upper order estimate.

Let
$$
g(\boldsymbol{x}) = C_4 \, 2^{-n(r_1 + 1 - \frac{1}{p})} n^{-{\frac{d-1}{\theta}}} d_n(\boldsymbol{x}), \quad C_4 >0,
$$
where
$$
d_n(\boldsymbol{x}) = \sum\limits_{({\boldsymbol{s}},{\boldsymbol{1}})= n} \, \sum\limits_{\boldsymbol{k} \in \rho({\boldsymbol{s}})} e^{i(\boldsymbol{k},\boldsymbol{x})}.
$$

Let us first show
 that  for certain choice of the constant  $C_4$,  $g \in B^{\boldsymbol{r}}_{p,\theta}$, ${\boldsymbol{r}}=(r_1, \dots, r_1)\in \mathbb{R}^d$, $r_1>0$, $1<p<\infty$,
$1 \leq \theta  \leq \infty$.

Let first $1\leq \theta <\infty$. Then we can write
\begin{align}
\|g\|_{B^{\boldsymbol{r}}_{p,\theta}} & \asymp \Bigg(    \sum\limits_{({\boldsymbol{s}}, {\boldsymbol{1}} ) = n} 2^{( {\boldsymbol{s}},{\boldsymbol{r}})\theta} \|\delta_{\boldsymbol{s}}(g)\|^{\theta}_p \Bigg)^{\frac{1}{\theta}}  \nonumber\\
&
\asymp 2^{-n(r_1 + 1 - \frac{1}{p})} n^{-{\frac{d-1}{\theta}}} \Bigg(    \sum\limits_{({\boldsymbol{s}},\boldsymbol{1} ) = n} 2^{( {\boldsymbol{s}},{\boldsymbol{r}})\theta} \|\delta_{\boldsymbol{s}}(d_n)\|^{\theta}_p \Bigg)^{\frac{1}{\theta}}
\nonumber\\ 
&
=  2^{-n(1 - \frac{1}{p})} n^{-{\frac{d-1}{\theta}}} \Bigg(    \sum\limits_{({\boldsymbol{s}},{\boldsymbol{1}} ) = n}  \|\delta_{\boldsymbol{s}}(d_n)\|^{\theta}_p \Bigg)^{\frac{1}{\theta}}.
 \label{eqno23}
\end{align}

Then we use the known relation
$$
\Big\| \sum\limits^m_{k = -m}  e^{i k x} \Big\|_p \asymp m^{1 - \frac{1}{p}}, \quad 1 < p < \infty, \ x \in \mathbb{R}
$$
(see, e.g., \cite[Ch. 1, Paragraph 1]{Temlyakov1993}), and get
\begin{equation}\label{eqno24}
\|\delta_{\boldsymbol{s}}(d_n)\|_p \asymp 2^{({\boldsymbol{s}}, {\boldsymbol{1}}) (1 - \frac{1}{p})}.
\end{equation}
Substituting (\ref{eqno24}) into (\ref{eqno23}), we continue the estimation of 
$\|g\|_{B^{\boldsymbol{r}}_{p,\theta}}$
as follows
\begin{align*}
\|g\|_{B^{\boldsymbol{r}}_{p,\theta}} & \asymp 2^{-n(1 - \frac{1}{p})} n^{-{\frac{d-1}{\theta}}} \Bigg(    \sum\limits_{({\boldsymbol{s}},{\boldsymbol{1}} ) = n} 2^{({\boldsymbol{s}},{\boldsymbol{1}})(1 - \frac{1}{p})\theta} \Bigg)^{\frac{1}{\theta}}
\nonumber\\ 
&
= n^{-{\frac{d-1}{\theta}}} \Bigg(    \sum\limits_{({\boldsymbol{s}},{\boldsymbol{1}} ) = n} 1 \Bigg)^{\frac{1}{\theta}} \asymp   n^{-{\frac{d-1}{\theta}}}  n^{\frac{d-1}{\theta}} = 1.
\end{align*}

Let further $\theta = \infty$.  Then the function  $g$  takes the form
$$
g(\boldsymbol{x}) = C_4 \, 2^{-n(r_1 + 1 - \frac{1}{p})} d_n(\boldsymbol{x})
$$
and therefore, in view of (\ref{eq1}) and (\ref{eqno24}), we get
$$
\|g\|_{B^{\boldsymbol{r}}_{p,\infty}} \asymp \sup\limits_{{\boldsymbol{s}} \in \mathbb{N}^d} 2^{({\boldsymbol{s}},{\boldsymbol{r}})} \|\delta_{\boldsymbol{s}}(g)\|_p \asymp \sup\limits_{{\boldsymbol{s}}: ({\boldsymbol{s}},{\boldsymbol{1}}) = n}\, 2^{({\boldsymbol{s}},{\boldsymbol{r}})} 2^{-n(r_1 + 1 - \frac{1}{p})}\|\delta_{\boldsymbol{s}}(d_n)\|_p \asymp 1.
$$
We make a conclusion that for an appropriate choice of the constant  $C_4>0$, the function $g \in B^{\boldsymbol{r}}_{p,\theta}$.

Then, taking into account that $S_{Q^{\boldsymbol{1}}_n}(g) = 0$ and the relation (\ref{eqno24}), we can write
\begin{align*}
E_{Q^{\boldsymbol{1}}_n} (g)_{B_{q,1}} & \asymp  \mathscr{E}_{Q^{\boldsymbol{1}}_n} (g)_{B_{q,1}} = \|g\|_{B_{q,1}}
\asymp  2^{-n(r_1 + 1  - \frac{1}{p})} n^{-{\frac{d-1}{\theta}}}  \sum\limits_{({\boldsymbol{s}},{\boldsymbol{1}} ) = n} \|\delta_ {\boldsymbol{s}}(d_n)\|_q 
\\ 
&
\asymp  2^{-n(r_1 + 1  - \frac{1}{p})} n^{-{\frac{d-1}{\theta}}}  \sum\limits_{({\boldsymbol{s}},{\boldsymbol{1}} ) = n} 2^{({\boldsymbol{s}},{\boldsymbol{1}})(1 - \frac{1}{q})}
\asymp 
2^{-n(r_1  - \frac{1}{p} + \frac{1}{q})} n^{(d-1)(1 - \frac{1}{\theta})}.
\end{align*}

Theorem \ref{Thm2} is proved
\end{proof}

In what follows, we separately consider the case $p=q$, since in this situation the orders of the corresponding approximation quantities for $Q^{{\boldsymbol{\gamma}}}_n$ and $Q^{{\boldsymbol{\gamma}}'}_n$ with 
$\gamma'_j = \gamma_j=1$  if  $j = 1,\dots, \nu$  and $1 < \gamma'_j < \gamma_j$  if 
$j = \nu +1, \dots, d$, differ.

\begin{theorem}\label{Thm1}
  Let $d \geq 2$, $1 < p < \infty$, $1 \leq \theta \leq \infty$. Then for $r_1 > 0$ it holds
\begin{equation*}
\mathscr{E}_{Q^{{\boldsymbol{\gamma}}'}_n} ( B^{\boldsymbol{r}}_{p,\theta})_{B_{p,1}}    \asymp  E_{Q^{{\boldsymbol{\gamma}}'}_n} ( B^{\boldsymbol{r}}_{p,\theta})_{B_{p,1}}   \asymp  2^{ -n  r_1} n^{(\nu - 1)(1 - \frac{1}{\theta})}.
\end{equation*}
\end{theorem}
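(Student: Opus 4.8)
The plan is to follow the proof of Theorem~\ref{Thm2} almost verbatim, with two simplifications: since now $q=p$, the different-metrics inequality (Theorem~A) is not needed at all, and the region of summation is governed by $\boldsymbol{\gamma}'$ instead of $\boldsymbol{\gamma}$, so that the \emph{second} assertion of Lemma~A (the one producing the factor $l^{\nu-1}$) is the relevant one. As in Theorem~\ref{Thm2}, because $1<p<\infty$ the equivalence $E_{Q^{\boldsymbol{\gamma}'}_n}(f)_{B_{p,1}}\asymp\mathscr{E}_{Q^{\boldsymbol{\gamma}'}_n}(f)_{B_{p,1}}$ established in \eqref{eqno5}--\eqref{eqno6} lets us work only with $\mathscr{E}$: estimate it from above for every $f\in B^{\boldsymbol{r}}_{p,\theta}$, and from below exhibit a single extremal polynomial.

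For the upper estimate, let $f\in B^{\boldsymbol{r}}_{p,\theta}$. By \eqref{norm_Bq1} and the pairwise disjointness of the blocks $\rho(\boldsymbol{s})$,
\[
\mathscr{E}_{Q^{\boldsymbol{\gamma}'}_n}(f)_{B_{p,1}}=\Bigl\|\sum_{(\boldsymbol{s},\boldsymbol{\gamma}')\ge n}\delta_{\boldsymbol{s}}(f)\Bigr\|_{B_{p,1}}\asymp\sum_{(\boldsymbol{s},\boldsymbol{\gamma}')\ge n}\|\delta_{\boldsymbol{s}}(f)\|_p=:J_1 .
\]
I would then split into the cases $\theta=1$, $1<\theta<\infty$, $\theta=\infty$, paralleling \eqref{eqno18}--\eqref{eqno21}, writing $\|\delta_{\boldsymbol{s}}(f)\|_p=2^{-(\boldsymbol{s},\boldsymbol{r})}\,2^{(\boldsymbol{s},\boldsymbol{r})}\|\delta_{\boldsymbol{s}}(f)\|_p$ and using $(\boldsymbol{s},\boldsymbol{r})=r_1(\boldsymbol{s},\boldsymbol{\gamma})\ge r_1(\boldsymbol{s},\boldsymbol{\gamma}')\ge r_1 n$ on the summation domain. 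For $\theta=1$ this gives at once $J_1\le 2^{-nr_1}\|f\|_{B^{\boldsymbol{r}}_{p,1}}\le 2^{-nr_1}$; for $1<\theta<\infty$, Hölder's inequality with exponent $\theta$ gives $J_1\ll\|f\|_{B^{\boldsymbol{r}}_{p,\theta}}\bigl(\sum_{(\boldsymbol{s},\boldsymbol{\gamma}')\ge n}2^{-r_1\theta'(\boldsymbol{s},\boldsymbol{\gamma})}\bigr)^{1/\theta'}$; and for $\theta=\infty$ one uses $\|\delta_{\boldsymbol{s}}(f)\|_p\ll 2^{-(\boldsymbol{s},\boldsymbol{r})}$ (cf.\ \eqref{norm_delta_infty}) to get $J_1\ll\sum_{(\boldsymbol{s},\boldsymbol{\gamma}')\ge n}2^{-r_1(\boldsymbol{s},\boldsymbol{\gamma})}$. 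In every case the remaining sum is $\sum_{(\boldsymbol{s},\boldsymbol{\gamma}')\ge n}2^{-\alpha(\boldsymbol{s},\boldsymbol{\gamma})}$ with $\alpha=r_1\theta'>0$ (resp.\ $\alpha=r_1>0$), and the second part of Lemma~A evaluates it as $\asymp 2^{-\alpha n}n^{\nu-1}$. Collecting exponents yields $J_1\ll 2^{-nr_1}n^{(\nu-1)(1-1/\theta)}$, hence the upper bound in the theorem for $\mathscr{E}$, and by \eqref{eqno6} also for $E$.

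For the lower estimate it is enough to treat $\nu=d$, when $\boldsymbol{r}=(r_1,\dots,r_1)$ and $\boldsymbol{\gamma}'=\boldsymbol{1}$, so $Q^{\boldsymbol{\gamma}'}_n=Q^{\boldsymbol{1}}_n$. I would take the same test function as in Theorem~\ref{Thm2}: $g=C_4\,2^{-n(r_1+1-1/p)}n^{-(d-1)/\theta}d_n$ for $1\le\theta<\infty$ and $g=C_4\,2^{-n(r_1+1-1/p)}d_n$ for $\theta=\infty$, with $d_n(\boldsymbol{x})=\sum_{(\boldsymbol{s},\boldsymbol{1})=n}\sum_{\boldsymbol{k}\in\rho(\boldsymbol{s})}e^{i(\boldsymbol{k},\boldsymbol{x})}$. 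Using \eqref{eqno24} and $\#\{\boldsymbol{s}\colon(\boldsymbol{s},\boldsymbol{1})=n\}\asymp n^{d-1}$, the verification $g\in B^{\boldsymbol{r}}_{p,\theta}$ for a suitable $C_4$ is word for word the computation following \eqref{eqno23}. Since $S_{Q^{\boldsymbol{1}}_n}(g)=0$, it follows that $E_{Q^{\boldsymbol{1}}_n}(g)_{B_{p,1}}\asymp\mathscr{E}_{Q^{\boldsymbol{1}}_n}(g)_{B_{p,1}}=\|g\|_{B_{p,1}}\asymp 2^{-n(r_1+1-1/p)}n^{-(d-1)/\theta}\,n^{d-1}\,2^{n(1-1/p)}=2^{-nr_1}n^{(d-1)(1-1/\theta)}$, matching the upper bound. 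For $\nu<d$ one repeats this with $d_n$ assembled from blocks $\boldsymbol{s}=(\boldsymbol{m},1,\dots,1)$, $\boldsymbol{m}\in\mathbb{N}^{\nu}$, where $|\boldsymbol{m}|_1$ is the least integer with $(\boldsymbol{s},\boldsymbol{\gamma}')\ge n$ (so that $S_{Q^{\boldsymbol{\gamma}'}_n}(g)=0$); the last $d-\nu$ coordinates frozen at the minimal dyadic level contribute only constants, while $\#\{\boldsymbol{m}\in\mathbb{N}^{\nu}\colon|\boldsymbol{m}|_1\asymp n\}\asymp n^{\nu-1}$ supplies the exponent $(\nu-1)(1-1/\theta)$.

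The only step requiring any care — and it is the same point that makes the order $n^{\nu-1}$ rather than $n^{d-1}$ — is matching the geometry of $Q^{\boldsymbol{\gamma}'}_n$ with Lemma~A: the upper-bound sums run over $(\boldsymbol{s},\boldsymbol{\gamma}')\ge n$ while the weight decays like $2^{-\alpha(\boldsymbol{s},\boldsymbol{\gamma})}$, which is exactly the mixed situation covered by the second assertion of Lemma~A, and, dually, the extremal function must be supported on the blocks $\boldsymbol{s}$ lying just outside $Q^{\boldsymbol{\gamma}'}_n$ (not $Q^{\boldsymbol{\gamma}}_n$). Everything else is lighter than in Theorem~\ref{Thm2}, since no different-metrics inequality enters when $p=q$; in particular the condition on $r_1$ weakens to $r_1>0$ because $1/p-1/q=0$.
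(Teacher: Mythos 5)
Your proposal is correct and follows essentially the same route as the paper: reduce to bounding $J=\sum_{(\boldsymbol{s},\boldsymbol{\gamma}')\ge n}\|\delta_{\boldsymbol{s}}(f)\|_p$, use $(\boldsymbol{s},\boldsymbol{r})=r_1(\boldsymbol{s},\boldsymbol{\gamma})\ge r_1(\boldsymbol{s},\boldsymbol{\gamma}')$ together with H\"older and the second assertion of Lemma~A for the three cases of $\theta$, and reuse the extremal function $g$ from Theorem~\ref{Thm2} (with $\nu=d$) for the lower bound. The only difference is that you additionally sketch a direct construction for $\nu<d$, which the paper bypasses by asserting that the case $\nu=d$ suffices.
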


\begin{proof} We will argue similarly to that in Theorem~\ref{Thm2}.
For $f\in B^{\boldsymbol{r}}_{p,\theta}$, we obtain
\begin{align*}
\mathscr{E}_{Q^{{\boldsymbol{\gamma}}'}_n} (f)_{B_{p,1}}  \leq  
\sum\limits_{(s, {\boldsymbol{\gamma}}') \geq n}\| \delta_s(f) \|_p
 =: J_2.
\end{align*}

Further, as above, we distinguish three cases.

For the case $\theta = 1$, we note that 
$2^{-({\boldsymbol{s}},{\boldsymbol{\gamma}})r_1}\leq 2^{-({\boldsymbol{s}},{\boldsymbol{\gamma}}')r_1}$.

In the case $1 
<\theta < \infty$, by  Lemma A, in view of ${\boldsymbol{\gamma}} = {\boldsymbol{r}}/r_1$, we obtain
\begin{align*}
J_2 & \ll \|f\|_{B^{\boldsymbol{r}}_{p,\theta}}  \Bigg(  \sum\limits_{({\boldsymbol{s}}, {\boldsymbol{\gamma}}') \geq n} 2^{-({\boldsymbol{s}},{\boldsymbol{r}})\theta'}  \Bigg)^{\frac{1}{\theta'}}
\leq \Bigg(  \sum\limits_{({\boldsymbol{s}}, {\boldsymbol{\gamma}}') \geq n} 2^{-({\boldsymbol{s}},{\boldsymbol{\gamma}}) r_1\, \theta'}  \Bigg)^{\frac{1}{\theta'}} 
\nonumber\\ 
&\ll 2^{-n r_1} n^{(\nu - 1)(1 - \frac{1}{\theta})}.
\end{align*}

Case $\theta = \infty$. In view of (\ref{norm_delta_infty}),
 Lemma A yields
\begin{equation*}
J_2 \ll \sum\limits_{({\boldsymbol{s}}, {\boldsymbol{\gamma}}') \geq n} 2^{-({\boldsymbol{s}}, {\boldsymbol{r}})} \ll 2^{-n r_1} n^{\nu - 1}.
\end{equation*}

For the respective lower estimate, it is sufficient to consider the function $g\in B^{\boldsymbol{r}}_{p,\theta}$ from the proof of Theorem~\ref{Thm2}.

Theorem \ref{Thm1} is proved.
\end{proof}

\begin{remark}
Under the conditions of Theorem \ref{Thm1}, for the quantities $\mathscr{E}_{Q^{{\boldsymbol{\gamma}}}_n}(B^{\boldsymbol{r}}_{p,\theta})_{B_{p,1}}$    and $E_{Q^{{\boldsymbol{\gamma}}}_n}(B^{\boldsymbol{r}}_{p,\theta})_{B_{p,1}}$ it holds
\begin{equation}\label{eqno15}
\mathscr{E}_{Q^{{\boldsymbol{\gamma}}}_n}(B^{\boldsymbol{r}}_{p,\theta})_{B_{p,1}} \asymp E_{Q^{{\boldsymbol{\gamma}}}_n}(B^{\boldsymbol{r}}_{p,\theta})_{B_{p,1}} \asymp  2^{-n r_1} n^{(d - 1)(1- \frac{1}{\theta})}.
\end{equation}
This is a corollary from (see Lemma A)
$$
\sum\limits_{({\boldsymbol{s}},{\boldsymbol{\gamma}} )\geq n} 2^{-({\boldsymbol{s}},{\boldsymbol{r}})} =
\sum\limits_{({\boldsymbol{s}},{\boldsymbol{\gamma}} )\geq n} 2^{-({\boldsymbol{s}},{\boldsymbol{\gamma}})r_1} \asymp
2^{-n r_1} n^{d - 1}.
$$
\end{remark}

To comment on the obtained results in Theorems \ref{Thm2} and \ref{Thm1},
first, let us note that the corresponding statements, where the error norm is measured  in  $L_q$,  are known. Let us formulate them for convenience.

\textbf{Theorem B.}
 {  \it
Let $d \geq 2$.
Then for $r_1 > 0$ it holds
$$
\mathscr{E}_{Q^{{\boldsymbol{\gamma}}'}_n} ( B^{\boldsymbol{r}}_{p,\theta})_p   \asymp  E_{Q^{{\boldsymbol{\gamma}}'}_n} ( B^{\boldsymbol{r}}_{p,\theta})_p
 \asymp \left\{
 \begin{array}{ll}
          2^{ -n  r_1} n^{(\nu - 1)(\frac{1}{p} - \frac{1}{\theta})}  ,  & 1 < p \leq 2, \
 p < \theta \leq \infty,  \\
          2^{ -n  r_1} n^{(\nu - 1)(\frac{1}{2} - \frac{1}{\theta})}  ,  & 2 < p < \infty, \
 2 < \theta \leq \infty.
          \end{array}
        \right.
$$

In the cases $1 < p \leq 2$  and $1 \leq \theta \leq p$, or  $2 < p < \infty$  and $1 \leq \theta \leq 2$ it holds
$$
\mathscr{E}_{Q^{{\boldsymbol{\gamma}}}_n} ( B^{\boldsymbol{r}}_{p,\theta})_p   \asymp  E_{Q^{{\boldsymbol{\gamma}}}_n} ( B^{\boldsymbol{r}}_{p,\theta})_p   \asymp 2^{-n r_1}.
$$}

In the case  $\theta = \infty$, 
Theorem B was proved
in \cite{Bugrov1964} for $p=2$  and in \cite{Nikolskaja1974}  for $1 < p < \infty$. In the case $1 \leq \theta < \infty$, 
the corresponding estimates were obtained
in  \cite{Romanyuk1991}.

\textbf{Theorem C.}
 {  \it Let $d \geq 2$,  $1 < p < q < \infty$, $1 \leq  \theta \leq \infty$.   Then for $r_1 > 
 1/p-1/q$   the following relations hold
\begin{equation*}
\mathscr{E}_{Q^{{\boldsymbol{\gamma}}}_n} ( B^{\boldsymbol{r}}_{p,\theta})_q    \asymp  E_{Q^{{\boldsymbol{\gamma}}}_n} ( B^{\boldsymbol{r}}_{p,\theta})_q   \asymp  2^{ -n( r_1 - \frac{1}{p} + \frac{1}{q})} n^{(\nu - 1)(\frac{1}{q} - \frac{1}{\theta})_{+}},
 \end{equation*}
  where
$a_{+} = \max \{a, 0 \}$.
}

In the case $\theta = \infty$, 
Theorem C 
was proved 
in  \cite{Temlyakov1983} for  $1 < p < q \leq 2$  and  in \cite{Temlyakov1989_121p} for $1 < p < q < \infty$, $q > 2$, respectively.
 In the case $1 \leq \theta < \infty$, 
 the corresponding estimates were obtained in  \cite{Romanyuk1991}.

\begin{remark}
Comparing the results of Theorems \ref{Thm2} and  \ref{Thm1} (see also (\ref{eqno15})) and Theorems B and C, we make the conclusion:
only in the cases $\theta = 1$  or $\nu = 1$ ($d=1$ respectively) the corresponding approximation characteristics of the classes $B^{\boldsymbol{r}}_{p,\theta}$ in the spaces  $B_{q,1}$   and  $L_q$, $1 < p\leq q < \infty$, coincide in order and, if additionally $p=q$, they do not depend on the parameter $p$.  
In all other cases, their orders differ.
 \end{remark}

Returning to Theorem \ref{Thm1}, we note that it does not cover the limiting cases  $p \in \{ 1, \infty\}$.

%

The next statement concerns these limiting cases, but only for the quantity 
$E_{Q^{{\boldsymbol{\gamma}}'}_n} ( B^{\boldsymbol{r}}_{p,\theta})_{B_{p,1}}$. 

\begin{theorem}\label{Thm3}
Let $d \geq 2$,  $p \in \{1, \infty\}$, $1 \leq  \theta \leq \infty$. Then for $r_1 > 0$ it holds
\begin{equation}\label{eqno27}
 E_{Q^{{\boldsymbol{\gamma}}'}_n} ( B^{\boldsymbol{r}}_{p,\theta})_{B_{p,1}}   \asymp  2^{ -n  r_1} n^{(\nu - 1)(1 - \frac{1}{\theta})}.
\end{equation}
\end{theorem}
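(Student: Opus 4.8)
The plan is to carry out the argument of the proofs of Theorems~\ref{Thm2} and~\ref{Thm1} on the de la Vall\'ee--Poussin blocks $A_{\boldsymbol{s}}$ in place of the Dirichlet blocks $\delta_{\boldsymbol{s}}$; this is exactly the change that makes everything survive at the endpoints $p\in\{1,\infty\}$. The Fourier projector $S_{Q^{{\boldsymbol{\gamma}}'}_n}$ used in Theorem~\ref{Thm1} is \emph{not} bounded on $B_{1,1}$ or on $B_{\infty,1}$ --- which is precisely why \eqref{eqno27} is asserted only for $E_{Q^{{\boldsymbol{\gamma}}'}_n}$ and not for $\mathscr{E}_{Q^{{\boldsymbol{\gamma}}'}_n}$ --- whereas convolution with $A_{\boldsymbol{s}}$ is bounded on every $L_p$, $1\le p\le\infty$, with $\|A_{\boldsymbol{s}}\|_{L_p\to L_p}\le\|A_{\boldsymbol{s}}\|_1\ll1$ uniformly in ${\boldsymbol{s}}$, and the characterisations \eqref{norm_via_A_s_notinfty}, \eqref{norm_via_A_s_infty} of $B^{\boldsymbol{r}}_{p,\theta}$ and the definition \eqref{norm_Bq1_via_As} of $\|\cdot\|_{B_{p,1}}$ remain valid there.

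\emph{Upper estimate.} For $f\in B^{\boldsymbol{r}}_{p,\theta}$ I would take as approximant $P:=\sum_{({\boldsymbol{s}},{\boldsymbol{\gamma}}')<n-c}A_{\boldsymbol{s}}(f)$, where $c=c(d,{\boldsymbol{\gamma}}')$ is a fixed constant: the spectrum of $A_{\boldsymbol{s}}$ meets only the blocks $\rho({\boldsymbol{m}})$ with $m_j\in\{s_j,s_j+1\}$, so $c=\lceil({\boldsymbol{1}},{\boldsymbol{\gamma}}')\rceil$ ensures $P\in T(Q^{{\boldsymbol{\gamma}}'}_n)$. Then $f-P=\sum_{({\boldsymbol{s}},{\boldsymbol{\gamma}}')\ge n-c}A_{\boldsymbol{s}}(f)$, and since $A_{\boldsymbol{m}}(A_{\boldsymbol{s}}(f))=0$ whenever $\|{\boldsymbol{m}}-{\boldsymbol{s}}\|_\infty>1$, the uniform bound $\|A_{\boldsymbol{m}}\|_{L_p\to L_p}\ll1$ gives
\[
E_{Q^{{\boldsymbol{\gamma}}'}_n}(f)_{B_{p,1}}\le\|f-P\|_{B_{p,1}}=\sum_{{\boldsymbol{m}}\in\mathbb{N}^d}\|A_{\boldsymbol{m}}(f-P)\|_p\ll\sum_{({\boldsymbol{s}},{\boldsymbol{\gamma}}')\ge n-c}\|A_{\boldsymbol{s}}(f)\|_p=:J.
\]
From here $J$ is estimated exactly as $J_2$ in the proof of Theorem~\ref{Thm1} (with $\delta_{\boldsymbol{s}}$ replaced by $A_{\boldsymbol{s}}$ and $n$ by $n-c$): using \eqref{norm_via_A_s_notinfty}--\eqref{norm_via_A_s_infty}, the inequality $2^{-({\boldsymbol{s}},{\boldsymbol{r}})}\le 2^{-({\boldsymbol{s}},{\boldsymbol{\gamma}}')r_1}$ (as ${\boldsymbol{r}}=r_1{\boldsymbol{\gamma}}$ and $\gamma'_j\le\gamma_j$) and Lemma~A, one gets $J\ll2^{-nr_1}$ for $\theta=1$, $J\ll\|f\|_{B^{\boldsymbol{r}}_{p,\theta}}\,2^{-nr_1}n^{(\nu-1)/\theta'}$ for $1<\theta<\infty$ (H\"older with exponents $\theta,\theta'$ and the second form of Lemma~A), and $J\ll2^{-nr_1}n^{\nu-1}$ for $\theta=\infty$ (using $\|A_{\boldsymbol{s}}(f)\|_p\ll2^{-({\boldsymbol{s}},{\boldsymbol{r}})}$ and Lemma~A). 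Since $(\nu-1)/\theta'=(\nu-1)(1-1/\theta)$, this is the upper bound in \eqref{eqno27}.

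\emph{Lower estimate.} As in the proof of Theorem~\ref{Thm2} it suffices to obtain the lower bound for $\nu=d$ (restricting to functions of the first $\nu$ variables), so ${\boldsymbol{r}}=(r_1,\dots,r_1)$, ${\boldsymbol{\gamma}}'={\boldsymbol{1}}$. I would fix $m=n+c_1$ with $c_1=c_1(d)$ a constant (and $m$ in an appropriate residue class mod $3$), pick a $3$-separated set $\Gamma\subset\{{\boldsymbol{s}}\in\mathbb{N}^d:({\boldsymbol{s}},{\boldsymbol{1}})=m\}$ with $|\Gamma|\asymp m^{d-1}$, and put $g:=\lambda\sum_{{\boldsymbol{s}}\in\Gamma}A_{\boldsymbol{s}}$. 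One uses the standard relations $\|A_{\boldsymbol{s}}\|_p\asymp\|A_{\boldsymbol{s}}\ast A_{\boldsymbol{s}}\|_p\asymp2^{({\boldsymbol{s}},{\boldsymbol{1}})(1-1/p)}$, valid for $1\le p\le\infty$ --- this is where it is essential to work with $A_{\boldsymbol{s}}$ and not with the Dirichlet block $\delta_{\boldsymbol{s}}(d_n)$, whose $L_1$-norm is of order $\prod_j s_j$ rather than $1$ --- together with the $3$-separation: it makes $A_{\boldsymbol{s}}(g)=\lambda\,A_{\boldsymbol{s}}\ast A_{\boldsymbol{s}}$ for ${\boldsymbol{s}}\in\Gamma$, while $A_{\boldsymbol{m}}(g)=\lambda\,A_{\boldsymbol{m}}\ast A_{{\boldsymbol{s}}_0}$ for the unique ${\boldsymbol{s}}_0\in\Gamma$ within $\|\cdot\|_\infty$-distance $1$ of ${\boldsymbol{m}}$, and $A_{\boldsymbol{m}}(g)=0$ for all remaining ${\boldsymbol{m}}$. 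Hence $\|g\|_{B^{\boldsymbol{r}}_{p,\theta}}\asymp m^{(d-1)/\theta}\,2^{m(r_1+1-1/p)}\lambda$ (with $1/\theta:=0$, $m^0:=1$ for $\theta=\infty$), and the choice $\lambda\asymp m^{-(d-1)/\theta}\,2^{-m(r_1+1-1/p)}$ puts $g$ into $B^{\boldsymbol{r}}_{p,\theta}$.

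\emph{Conclusion and the main difficulty.} The buffer $c_1$ is chosen so that $A_{\boldsymbol{m}}(t)=0$ whenever $A_{\boldsymbol{m}}(g)\ne0$ and $t\in T(Q^{{\boldsymbol{1}}}_n)$; then $\|g-t\|_{B_{p,1}}=\sum_{{\boldsymbol{m}}}\|A_{\boldsymbol{m}}(g)-A_{\boldsymbol{m}}(t)\|_p\ge\sum_{{\boldsymbol{m}}}\|A_{\boldsymbol{m}}(g)\|_p=\|g\|_{B_{p,1}}$ for every such $t$, so
\[
E_{Q^{{\boldsymbol{\gamma}}'}_n}(g)_{B_{p,1}}=\|g\|_{B_{p,1}}\asymp\lambda\,|\Gamma|\,2^{m(1-1/p)}\asymp2^{-mr_1}m^{(d-1)(1-1/\theta)}\asymp2^{-nr_1}n^{(\nu-1)(1-1/\theta)},
\]
matching the upper bound. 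I do not expect a genuine obstacle: the whole proof is the scheme of Theorems~\ref{Thm2}--\ref{Thm1} transported to the blocks $A_{\boldsymbol{s}}$, with $S_{Q^{{\boldsymbol{\gamma}}'}_n}$ replaced by the near-projector $\sum_{({\boldsymbol{s}},{\boldsymbol{\gamma}}')<n-c}A_{\boldsymbol{s}}(\cdot)$. The only points needing care are the harmless constant shifts $n\mapsto n\pm c$ caused by the overlapping spectra of the $A_{\boldsymbol{s}}$, and, in the lower bound, the use of a genuinely separated $\Gamma$ so that the blocks of $g$ do not cancel against one another.
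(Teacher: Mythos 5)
Your proof is correct, and the upper estimate is essentially the paper's own argument: the authors use exactly the near-projector $t_n=\sum_{({\boldsymbol{s}},{\boldsymbol{\gamma}}')<n-({\boldsymbol{\gamma}}',{\boldsymbol{1}})}A_{\boldsymbol{s}}(f)$, the uniform bound $\|A_{\boldsymbol{s}}\|_1\ll1$, H\"older's inequality and Lemma~A, just as you do. The lower estimate, however, is where you genuinely diverge. The paper does not construct an extremal function at all: for $p=1$ it imports the known $L_1$-result (Theorem~D) together with $\|\cdot\|_{B_{1,1}}\gg\|\cdot\|_1$, and for $p=\infty$ it forward-references the Kolmogorov-width lower bound of Theorem~\ref{Thm8}, which in turn rests on the entropy-number lower bound of Theorem~\ref{Thm5} (Temlyakov's packing construction with $T'(Q'_n)_\infty$ in $L_2$) combined with Carl's inequality (Lemma~B). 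Your direct construction $g=\lambda\sum_{{\boldsymbol{s}}\in\Gamma}A_{\boldsymbol{s}}$ over a $3$-separated set $\Gamma$ is sound --- the separation does force $A_{\boldsymbol{m}}(g)$ to reduce to a single convolution $A_{\boldsymbol{m}}\ast A_{{\boldsymbol{s}}_0}$, the two-sided bounds $\|A_{\boldsymbol{s}}\|_p\asymp\|A_{\boldsymbol{s}}\ast A_{\boldsymbol{s}}\|_p\asymp2^{({\boldsymbol{s}},{\boldsymbol{1}})(1-1/p)}$ hold for $p\in\{1,\infty\}$, and the buffer $m=n+c_1$ kills $A_{\boldsymbol{m}}(t)$ on the support of $A_{\boldsymbol{m}}(g)$ for every $t\in T(Q^{{\boldsymbol{\gamma}}'}_n)$ --- and it is correctly calibrated to both endpoint classes. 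What your route buys is a self-contained, elementary proof that treats $p=1$ and $p=\infty$ uniformly and does not lean on the cited $L_1$ literature or on the entropy machinery; what the paper's route buys is economy, since Theorems~\ref{Thm5} and~\ref{Thm8} are needed in Section~\ref{sec3:widths_en} anyway and the width lower bound then yields the best-approximation lower bound for free (Remark~\ref{Rem3}). Your only unproved reduction --- passing to $\nu=d$ for the lower bound --- is invoked at the same level of detail as in the paper's proof of Theorem~\ref{Thm2}, so it is not a gap relative to the paper's standard.
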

\begin{proof}
We begin with the upper estimate.
As an approximation aggregate for a function $f\in B^{\boldsymbol{r}}_{p,\theta}$, we take the polynomial
\begin{equation}\label{eqno27.1}
t_n :=t_n(f,\boldsymbol{x}) =  \sum\limits_{( {\boldsymbol{s}}, {\boldsymbol{\gamma}}' ) < n - ({\boldsymbol{\gamma}}' , {\boldsymbol{1}})}  A_{\boldsymbol{s}}(f,\boldsymbol{x}), \quad \boldsymbol{x} \in \mathbb{R}^d.
\end{equation}
One can easily check that  $t_n \in T(Q^{{\boldsymbol{\gamma}}'}_n)$.

Then, by the definition (\ref{norm_Bq1_via_As}) of the norm in the space $B_{p,1}$, $p \in \{ 1,\infty \}$,  we can write
\begin{align}
E_{Q^{{\boldsymbol{\gamma}}'}_n} (f)_{B_{p,1}} & \leq  \|f - t_n\|_{B_{p,1}} =
\Big\| \sum\limits_{({\boldsymbol{s}}, {\boldsymbol{\gamma}}') \geq n - ({\boldsymbol{\gamma}}' , {\boldsymbol{1}})} \, A_{\boldsymbol{s}}(f)\Big\|_{B_{p,1}}
\nonumber\\ 
&  =\sum\limits_{ {\boldsymbol{s}} \in \mathbb{N}^d} \,
\Bigg\| A_{\boldsymbol{s}}  \ast \, \mathop{\sum_{{\boldsymbol{s}}' \in \mathbb{N}^d}} \limits_{({\boldsymbol{s}}', {\boldsymbol{\gamma}}') \geq n - ({\boldsymbol{\gamma}}' , {\boldsymbol{1}})} \, A_{{\boldsymbol{s}}'}(f)\Bigg\|_p 
\nonumber\\
&\leq
\sum\limits_{({\boldsymbol{s}}, {\boldsymbol{\gamma}}') \geq n - 2 ({\boldsymbol{\gamma}}' , {\boldsymbol{1}})} \, \Big\|A_{\boldsymbol{s}}  \ast  \sum\limits_{\| {\boldsymbol{s}} - {\boldsymbol{s}}'\|_{\infty} \leq 1} \, A_{{\boldsymbol{s}}'}(f) \Big\|_p
\nonumber\\ 
&
\leq \sum\limits_{({\boldsymbol{s}}, {\boldsymbol{\gamma}}') \geq n - 2 ({\boldsymbol{\gamma}}' , {\boldsymbol{1}})} \, \|A_{\boldsymbol{s}}\|_1 \,
\Big\| \sum\limits_{\| {\boldsymbol{s}} - {\boldsymbol{s}}'\|_{\infty} \leq 1} \, A_{{\boldsymbol{s}}'}(f) \Big\|_p . 
\label{eqno28}
\end{align}

Then we use the fact that, in view of the relation
 $\|V_{2^s}\|_1 \leq C_5$, $C_5 > 0$  (see, e.g., \cite[Ch. 1, Paragraph 1]{Temlyakov1993}), it holds
\begin{equation}\label{norm_As_1}
   \|A_{\boldsymbol{s}}\|_1 =
   \Big\|\prod_{j=1}^d (V_{2^{s_j}} - V_{2^{s_j-1}})\Big\|_1 \leq  
   \Big\|\prod_{j=1}^d V_{2^{s_j}} \Big\|_1 +   \Big\|\prod_{j=1}^d V_{2^{s_j-1}} \Big\|_1 \leq C_6, \quad C_6 >0. 
\end{equation}

Hence, from (\ref{eqno28})  we obtain
\begin{align}
E_{Q^{{\boldsymbol{\gamma}}'}_n} (f)_{B_{p,1}}
&\ll  \sum\limits_{({\boldsymbol{s}}, {\boldsymbol{\gamma}}') \geq n - 2 ({\boldsymbol{\gamma}}' , {\boldsymbol{1}})} \,  \sum\limits_{\| {\boldsymbol{s}} - {\boldsymbol{s}}'\|_{\infty} \leq 1} \, \|A_{{\boldsymbol{s}}'}(f) \|_p
\ll   \sum\limits_{({\boldsymbol{s}}, {\boldsymbol{\gamma}}') \geq n - 3 ({\boldsymbol{\gamma}}' , {\boldsymbol{1}})} \|A_{\boldsymbol{s}}(f)\|_p
\nonumber\\ 
&
  =  
 \sum\limits_{({\boldsymbol{s}}, {\boldsymbol{\gamma}}') \geq n - 3 ({\boldsymbol{\gamma}}' , {\boldsymbol{1}})} 2^{({\boldsymbol{s}}, {\boldsymbol{r}})}\, \|A_{\boldsymbol{s}}(f)\|_p\,  2^{-({\boldsymbol{s}},{\boldsymbol{r}})}.
\label{eqno29}
\end{align}

Let first $1 \leq \theta < \infty$. Using the H\"{o}lder's inequality with the power $\theta$
(with an appropriate modification 
in the case $\theta = 1$), and then in view of
(\ref{norm_via_A_s_notinfty}),  ${\boldsymbol{\gamma}} = {\boldsymbol{r}}/r_1$ and Lemma A,  from (\ref{eqno29}) we get
\begin{align*}
E_{Q^{{\boldsymbol{\gamma}}'}_n} (f)_{B_{p,1}} & \ll  \Bigg( \sum\limits_{({\boldsymbol{s}}, {\boldsymbol{\gamma}}') \geq n - 3 ({\boldsymbol{\gamma}}' , {\boldsymbol{1}})} 2^{({\boldsymbol{s}},{\boldsymbol{r}})\theta} \|A_{\boldsymbol{s}}(f)\|^{\theta}_p \Bigg)^{\frac{1}{\theta}} \Bigg( \sum\limits_{({\boldsymbol{s}}, {\boldsymbol{\gamma}}') \geq n - 3 ({\boldsymbol{\gamma}}' , {\boldsymbol{1}})} 2^{-({\boldsymbol{s}},{\boldsymbol{r}})\theta'} \Bigg)^{\frac{1}{\theta'}}
\nonumber\\ 
&
  \ll   \|f\|_{B^{\boldsymbol{r}}_{p,\theta}} \Bigg(  \sum\limits_{({\boldsymbol{s}}, {\boldsymbol{\gamma}}') \geq n - 3 ({\boldsymbol{\gamma}}' , {\boldsymbol{1}})} 2^{-({\boldsymbol{s}},{\boldsymbol{\gamma}}) r_1 \theta'} \Bigg)^{\frac{1}{\theta'}} 
  \asymp  2^{ -n  r_1} n^{(\nu - 1)(1 - \frac{1}{\theta})}.
\end{align*}


If $\theta = \infty$, then, in view of the relation $\|A_{\boldsymbol{s}}(f)\|_p \ll 2^{-({\boldsymbol{s}},{\boldsymbol{r}})}$ 
(see (\ref{norm_via_A_s_infty})), from (\ref{eqno29}) for $p \in \{1, \infty\}$
we get  
\begin{align*}
 E_{Q^{{\boldsymbol{\gamma}}'}_n} (f)_{B_{p,1}}
 \ll \sum\limits_{({\boldsymbol{s}}, {\boldsymbol{\gamma}}') \geq n - 3 ({\boldsymbol{\gamma}}' , 1)} \|A_{\boldsymbol{s}}(f)\|_p
 \ll  
 \sum\limits_{({\boldsymbol{s}}, {\boldsymbol{\gamma}}') \geq n - 3 ({\boldsymbol{\gamma}}' , {\boldsymbol{1}})}2^{-({\boldsymbol{s}}, {\boldsymbol{\gamma}})r_1}  \asymp 2^{-n r_1}n^{\nu - 1}.
\end{align*}

The upper estimate is now proved.

Concerning the lower estimate in (\ref{eqno27}), we note the following.

The estimate of the quantity $E_{Q^{{\boldsymbol{\gamma}}'}_n} ( B^{\boldsymbol{r}}_{1,\theta})_{B_{1,1}}$ is a corollary from 
Theorem~D (see below) and the relation  $\|\cdot\|_{B_{1,1}} \gg \|\cdot\|_1$. 
The lower estimate of   $E_{Q^{{\boldsymbol{\gamma}}'}_n} ( B^{\boldsymbol{r}}_{\infty,\theta})_{B_{\infty,1}}$ follows from the corresponding estimate of the Kolmogorov widths
(see Theorem \ref{Thm8} and Remark \ref{Rem3}),  that will be obtained in 
Section~\ref{sec3:widths_en}.


Theorem \ref{Thm3} is proved.
\end{proof}
\begin{remark}
One can show that under the conditions of Theorem \ref{Thm3} it holds
\begin{equation}\label{eqno27.gamma}
 E_{Q^{{\boldsymbol{\gamma}}}_n} ( B^{\boldsymbol{r}}_{p,\theta})_{B_{p,1}}   \asymp  2^{ -n  r_1} n^{(d - 1)(1 - \frac{1}{\theta})}.
\end{equation}
\end{remark}


To compare the estimate (\ref{eqno27}) with the corresponding results in the space  $L_p$, $p \in \{ 1, \infty \}$,
let us formulate the known statements.

 \textbf{Theorem D. }   {  \it Let $d \geq 2$, $1 \leq  \theta \leq \infty$.   Then for $r_1 > 0$ it holds
\begin{equation}\label{eqno31}
 E_{Q^{{\boldsymbol{\gamma}}'}_n} ( B^{\boldsymbol{r}}_{1,\theta})_1   \asymp  2^{ -n  r_1} n^{(\nu - 1)(1 - \frac{1}{\theta})}.
\end{equation}
}
The estimate (\ref{eqno31})  in the case $\theta = \infty$ was obtained in \cite{Temlyakov1980}, 
and in the case $1 \leq \theta < \infty$ in \cite{Romanyuk2004}.

\textbf{Theorem E.}   {
\it Let $d = 2$, $1 \leq  \theta \leq \infty$.   Then for $\boldsymbol{r}=(r_1, r_1)$, $r_1 > 0$, it holds
\begin{equation}\label{eqno32}
 E_{Q^{{\boldsymbol{\gamma}}}_n} ( B^{\boldsymbol{r}}_{\infty,\theta})_\infty    \asymp  2^{ -n  r_1} n^{1 - \frac{1}{\theta}}.
\end{equation}
}
The estimate (\ref{eqno32}) in the case $\theta = \infty$    was proved in \cite{Temlyakov1980},  and in the case $1 \leq \theta < \infty$ in \cite{Romanyuk2011}.

\begin{remark}
  A  question concerning the orders of the quantity $E_{Q^{{\boldsymbol{\gamma}}}_n} ( B^{\boldsymbol{r}}_{\infty,\theta})_\infty$
for $d > 2$  remains open. In particular, for the classes $H^{\boldsymbol{r}}_\infty$  see \cite[Open Problem 4.7]{Dung_Temlyakov_Ullrich2019}.  
\end{remark}

\begin{remark}
    Comparing Theorem \ref{Thm3} for  $p=1$ with Theorem D we conclude that for  $d\geq 2$, $1 \leq \theta  \leq \infty$  and $r_1 > 0$  it holds
\begin{equation*}
E_{Q^{{\boldsymbol{\gamma}}'}_n}(B^{\boldsymbol{r}}_{1,\theta})_{B_{1,1}} \asymp E_{Q^{{\boldsymbol{\gamma}}'}_n}(B^{\boldsymbol{r}}_{1,\theta})_1
\asymp 2^{-n r_1}n^{(\nu - 1)(1 - \frac{1}{\theta})}.
\end{equation*}

Similarly, comparing 
Theorem \ref{Thm3} for $p = \infty$ (see also (\ref{eqno27.gamma}))  with
Theorem E
we see that for   $d=2$, $1 \leq  \theta \leq \infty$  and $\boldsymbol{r}=(r_1, r_1)$, $r_1 > 0$, it holds
\begin{equation*}
E_{Q^{{\boldsymbol{\gamma}}}_n}(B^{\boldsymbol{r}}_{\infty,\theta})_{B_{\infty,1}} \asymp E_{Q^{{\boldsymbol{\gamma}}}_n}(B^{\boldsymbol{r}}_{\infty,\theta})_\infty
\asymp 2^{-n r_1}n^{1 - \frac{1}{\theta}}.
\end{equation*}
\end{remark}

\begin{remark}
The orders of the quantities $\mathscr{E}_{Q^{{\boldsymbol{\gamma}}'}_n}(B^{\boldsymbol{r}}_{p,\theta})_{B_{p,1}}$   and   $\mathscr{E}_{Q^{{\boldsymbol{\gamma}}'}_n}(B^{\boldsymbol{r}}_{p,\theta})_{p}$, $p \in \{ 1, \infty \}$,  
$1\leq \theta \leq \infty$, $r_1>0$
in the case $d \geq 2$ 
remain unknown.
\end{remark}

In view of (\ref{eqno27}), (\ref{eqno27.gamma}) and (\ref{eqno32}),
we formulate the conjecture.

\textbf{Conjecture 1.} Let  $d>2$, $1 \leq   \theta \leq  \infty$.  Then for  $r_1 >0$ it holds
\begin{align*}
  E_{Q^{{\boldsymbol{\gamma}}'}_n}(B^{\boldsymbol{r}}_{\infty,\theta})_\infty & \asymp 2^{-n r_1}n^{(\nu - 1)(1 - \frac{1}{\theta})},
  \\
   E_{Q^{{\boldsymbol{\gamma}}}_n}(B^{\boldsymbol{r}}_{\infty,\theta})_\infty & \asymp 2^{-n r_1}n^{(d - 1)(1 - \frac{1}{\theta})}.
\end{align*}

To conclude this part of the paper, we consider one more relation between the parameters $p$   and  $q$.

\begin{theorem}\label{Thm4}
Let $d \geq 2$,  $1 \leq q < p \leq \infty$, $1 \leq  \theta \leq \infty$.  Then for $r_1 > 0$  it holds
\begin{equation}\label{eqno34}
\mathscr{E}_{Q^{{\boldsymbol{\gamma}}'}_n} ( B^{\boldsymbol{r}}_{p,\theta})_{B_{q,1}}    \asymp  E_{Q^{{\boldsymbol{\gamma}}'}_n} ( B^{\boldsymbol{r}}_{p,\theta})_{B_{q,1}}   \asymp  2^{ -n r_1} n^{(\nu - 1)(1 - \frac{1}{\theta})}.
\end{equation}
\end{theorem}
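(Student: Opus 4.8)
The plan is to follow the pattern already established in Theorems \ref{Thm2} and \ref{Thm1}, exploiting the embedding $B^{\boldsymbol{r}}_{p,\theta} \hookrightarrow B^{\boldsymbol{r}}_{q,\theta}$ that is available here because $q < p$. For the upper estimate, by \eqref{eqno6} it suffices to bound $\mathscr{E}_{Q^{{\boldsymbol{\gamma}}'}_n}(f)_{B_{q,1}}$ for $f\in B^{\boldsymbol{r}}_{p,\theta}$. Using the monotonicity of $L_q$-norms in $q$ on $\mathbb{T}^d$, one has $\|A_{\boldsymbol{s}}(f)\|_q \le \|A_{\boldsymbol{s}}(f)\|_p$, so that $\|f\|_{B^{\boldsymbol{r}}_{q,\theta}} \ll \|f\|_{B^{\boldsymbol{r}}_{p,\theta}}$ via \eqref{norm_via_A_s_notinfty}. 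Then, reusing the polynomial $t_n$ from \eqref{eqno27.1} and the convolution bound \eqref{norm_As_1} exactly as in the proof of Theorem \ref{Thm3}, one reduces to
$$
E_{Q^{{\boldsymbol{\gamma}}'}_n}(f)_{B_{q,1}} \ll \sum_{({\boldsymbol{s}},{\boldsymbol{\gamma}}')\geq n-3({\boldsymbol{\gamma}}',{\boldsymbol{1}})} 2^{({\boldsymbol{s}},{\boldsymbol{r}})}\|A_{\boldsymbol{s}}(f)\|_q\, 2^{-({\boldsymbol{s}},{\boldsymbol{r}})}.
$$
Splitting into the cases $\theta=1$, $1<\theta<\infty$ (Hölder with exponent $\theta$), and $\theta=\infty$ (using $\|A_{\boldsymbol{s}}(f)\|_q \ll 2^{-({\boldsymbol{s}},{\boldsymbol{r}})}$ from \eqref{norm_via_A_s_infty}), and invoking Lemma A with ${\boldsymbol{\gamma}}={\boldsymbol{r}}/r_1$ and the pair $({\boldsymbol{\gamma}},{\boldsymbol{\gamma}}')$, gives the upper bound $2^{-nr_1}n^{(\nu-1)(1-1/\theta)}$ in all three cases — this is essentially verbatim the computation in Theorem \ref{Thm3}.

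For the lower estimate it suffices, as before, to treat $\nu=d$, i.e. ${\boldsymbol{\gamma}}'={\boldsymbol{1}}$ effectively for the worst direction. Here I would use the same extremal function $g$ as in the proof of Theorem \ref{Thm2}, namely $g(\boldsymbol{x}) = C_4\, 2^{-n(r_1+1-1/p)} n^{-(d-1)/\theta} d_n(\boldsymbol{x})$ with $d_n = \sum_{({\boldsymbol{s}},{\boldsymbol{1}})=n}\sum_{\boldsymbol{k}\in\rho({\boldsymbol{s}})} e^{i(\boldsymbol{k},\boldsymbol{x})}$. One checks $g\in B^{\boldsymbol{r}}_{p,\theta}$ exactly as there (the argument only used $1<p<\infty$; for $p=\infty$ one uses $\|\delta_{\boldsymbol{s}}(d_n)\|_\infty \asymp 2^{({\boldsymbol{s}},{\boldsymbol{1}})}$, and for $p=1$ the lower bound should instead be extracted from Theorem \ref{Thm3} with $p=1$ together with $\|\cdot\|_{B_{1,1}}\gg\|\cdot\|_{B_{q,1}}$ is false in the wrong direction — so more care is needed, see below). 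Since $S_{Q^{{\boldsymbol{\gamma}}'}_n}(g)=0$, one has $\mathscr{E}_{Q^{{\boldsymbol{\gamma}}'}_n}(g)_{B_{q,1}} = \|g\|_{B_{q,1}} \asymp 2^{-n(r_1+1-1/p)}n^{-(d-1)/\theta}\sum_{({\boldsymbol{s}},{\boldsymbol{1}})=n}\|\delta_{\boldsymbol{s}}(d_n)\|_q$, and using $\|\delta_{\boldsymbol{s}}(d_n)\|_q \asymp 2^{({\boldsymbol{s}},{\boldsymbol{1}})(1-1/q)}$ (for $1<q<\infty$) or $\asymp 2^{({\boldsymbol{s}},{\boldsymbol{1}})}$ (for $q=1$, where $\|\delta_{\boldsymbol{s}}(d_n)\|_1 \asymp \prod_j\|\sum_{|k_j|<2^{s_j}} e^{ik_jx_j} - \sum_{|k_j|<2^{s_j-1}}e^{ik_jx_j}\|_1$ needs the $L_1$-norm of a difference of Dirichlet-type kernels, which is only $\asymp s_j$, i.e. a logarithmic loss) one collects the powers of $2^{-n(\cdot)}$ and $n$.

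The main obstacle I anticipate is the $q=1$ endpoint on the lower side: the clean computation $\|\delta_{\boldsymbol{s}}(d_n)\|_q\asymp 2^{({\boldsymbol{s}},{\boldsymbol{1}})(1-1/q)}$ fails at $q=1$ (Dirichlet kernels are not uniformly $L_1$-bounded), so the above $d_n$-based test function does not directly give the sharp lower bound for $E_{Q^{{\boldsymbol{\gamma}}'}_n}(B^{\boldsymbol{r}}_{p,\theta})_{B_{1,1}}$. I would instead note that $\|\cdot\|_{B_{1,1}} \gg \|\cdot\|_1$ and that the case $q<p$ with target $L_1$ is covered by the known $L_q$-theory: the lower bound $2^{-nr_1}n^{(\nu-1)(1-1/\theta)}$ for $E_{Q^{{\boldsymbol{\gamma}}'}_n}(B^{\boldsymbol{r}}_{p,\theta})_1$ with $1\le q<p$ follows from (a special case of) the results behind Theorems B/D (monotonicity in $q$ only helps when it points the right way, so one should cite the appropriate lower estimate for $E_{Q^{{\boldsymbol{\gamma}}'}_n}(B^{\boldsymbol{r}}_{p,\theta})_q$ with $q<p$ from \cite{Romanyuk1991, Romanyuk2004} directly). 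For $1<q<\infty$ and for $q=1,\,p>1$ the two bounds match the claimed order, completing the proof. I would organize the write-up as: (i) upper bound for all $q$ via $t_n$ and Lemma A; (ii) lower bound for $1<q<p$ via the function $g$; (iii) lower bound for $q=1$ by reduction to the $L_1$-estimate together with $\|\cdot\|_{B_{1,1}}\gg\|\cdot\|_1$.
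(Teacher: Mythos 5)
Your upper bound is fine and is essentially the paper's argument: by the class embedding $B^{\boldsymbol{r}}_{p,\theta}\subset B^{\boldsymbol{r}}_{q,\theta}$ (and, for $q=1$, the inequality $\|\cdot\|_{B_{1,1}}\ll\|\cdot\|_{B_{q,1}}$) everything reduces to the diagonal case $p=q$ already settled in Theorems \ref{Thm1} and \ref{Thm3}. The lower bound, however, has a genuine gap, and your own hesitations point at it without resolving it. If you actually collect the powers in your computation with $g=C_4\,2^{-n(r_1+1-1/p)}n^{-(d-1)/\theta}d_n$, you get
\begin{equation*}
\|g\|_{B_{q,1}}\asymp 2^{-n(r_1+1-\frac{1}{p})}n^{-\frac{d-1}{\theta}}\cdot 2^{n(1-\frac{1}{q})}n^{d-1}
= 2^{-n\left(r_1+\frac{1}{q}-\frac{1}{p}\right)}n^{(d-1)(1-\frac{1}{\theta})},
\end{equation*}
and since $q<p$ the exponent $\frac{1}{q}-\frac{1}{p}$ is strictly positive, so this is \emph{exponentially} smaller than the claimed order $2^{-nr_1}n^{(d-1)(1-1/\theta)}$. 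The single-spike function $d_n$ is the right extremal object only when $p\le q$; for $q<p$ it cannot realize the order, for any $q$, not just $q=1$. Your claim that ``for $1<q<\infty$ \dots the two bounds match'' is therefore false.

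Your fallback for $q=1$ — reducing to the known $L_1$ lower bounds via $\|\cdot\|_{B_{1,1}}\gg\|\cdot\|_1$ — also cannot work. Theorem G gives $E_{Q^{\boldsymbol{\gamma}'}_n}(B^{\boldsymbol{r}}_{p,\theta})_1\asymp 2^{-nr_1}n^{(\nu-1)(\frac{1}{p^*}-\frac{1}{\theta})_+}$ with $p^*=\min\{2,p\}>1$, which is strictly weaker than $n^{(\nu-1)(1-\frac{1}{\theta})}$ whenever $\theta>1$ and $\nu>1$; indeed, one of the main messages of the paper is precisely that the $B_{q,1}$ and $L_q$ orders \emph{differ} in this regime, so no $L_q$ lower bound can supply the needed estimate. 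The paper instead obtains the lower bound in (\ref{eqno34}) from the Kolmogorov width $d_M(B^{\boldsymbol{r}}_{p,\theta},B_{q,1})$ (Remark \ref{Rem3} to Theorem \ref{Thm8}), whose lower estimate rests on the entropy-number bound of Theorem \ref{Thm5}: a Temlyakov-type packing argument with the family $T'(Q'_n)_\infty$, the transfer inequality $\|f\|_{B_{1,1}}\gg\|f\|_2^2\big/\max_{\boldsymbol{s}'}\|A_{\boldsymbol{s}'}(f)\|_\infty$ converting $L_2$-separation into $B_{1,1}$-separation, and Carl's inequality (Lemma B). Some device of this kind — a rich, well-separated family rather than one extremal function — is unavoidable here, and it is the missing idea in your proposal.
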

\begin{proof}
The upper estimates follow from Theorem \ref{Thm1}. Indeed, taking into account that
 $\|\cdot\|_{B_{1,1}} \ll  \|\cdot\|_{B_{q,1}}$, $1 \leq q < \infty$,
  it is sufficient to prove these estimates in the case $1 < q < p \leq \infty$.
 From the other side, due to the embeddings
  $B^{\boldsymbol{r}}_{\infty,\theta} \subset B^{\boldsymbol{r}}_{p,\theta} \subset B^{\boldsymbol{r}}_{q, \theta}$,  we can restrict ourself in considering the case
  $1 < p =q < \infty$, i.e., use the corresponding estimates obtained in Theorem \ref{Thm1}.

  As for the lower estimates in (\ref{eqno34}), we note that they follow from the
  corresponding 
  estimates for the Kolmogorov width
(see Remark \ref{Rem3} to Theorem~\ref{Thm8}), that will be obtained in 
Section~\ref{sec3:widths_en}.

Theorem \ref{Thm4} is proved.
\end{proof}

Let us recall the known statements for the corresponding approximation characteristics in the space $L_q$.

\textbf{Theorem F.}   {
 \it Let $d \geq 2$, $1  < q < p  \leq \infty$, $p^{\ast} = \min \{ 2,p \}$, $r_1 > 0$.   Then for $p^{\ast} < \theta \leq \infty$
 it holds
\begin{equation}\label{eqno35.1}
\mathscr{E}_{Q^{{\boldsymbol{\gamma}}'}_n} ( B^{\boldsymbol{r}}_{p,\theta})_q    \asymp E_{Q^{{\boldsymbol{\gamma}}'}_n} ( B^{\boldsymbol{r}}_{p,\theta})_q    \asymp  2^{ -n  r_1} n^{(\nu - 1)(\frac{1}{p^{\ast}} - \frac{1}{\theta})}.
\end{equation}
In the case $1 \leq  \theta \leq  p^{\ast}$, it holds
\begin{equation*}
\mathscr{E}_{Q^{{\boldsymbol{\gamma}}}_n} ( B^{\boldsymbol{r}}_{p,\theta})_q    \asymp E_{Q^{{\boldsymbol{\gamma}}}_n} ( B^{\boldsymbol{r}}_{p,\theta})_q    \asymp  2^{ -n  r_1}.
\end{equation*}
}

For the classes $B^{\boldsymbol{r}}_{p,\infty}$  in  the  case  $2 \leq  q < p \leq \infty$  the  corresponding  upper  bounds  follow  from   the   upper   bounds  for the  case  $ 1 < q =p < \infty$ (see Theorem B)  and  the  lower  bounds  follow  from \cite{Temlyakov1980.1}.   In the  case  $1 < q < 2 \leq p < \infty$,  Theorem F  was  proved  in \cite{Dung1984,Galeev1984}.  In the case  $1 \leq  q < p \leq 2$,  the  proof  of the lower  bounds  required  a  new  technique, see \cite{Temlyakov1989}.  For $ 1 \leq \theta  < \infty$, the corresponding  statement was obtained in   \cite{Romanyuk1991}.

\textbf{Theorem G.}   {  \it
Let $d \geq 2$, 
$1  < p  \leq \infty$,  $1\leq \theta \leq \infty$.   Then for  $r_1 > 0$  it holds
\begin{equation}\label{eqno36}
\mathscr{E}_{Q^{{\boldsymbol{\gamma}}'}_n} ( B^{\boldsymbol{r}}_{p,\theta})_1    \asymp E_{Q^{{\boldsymbol{\gamma}}'}_n} ( B^{\boldsymbol{r}}_{p,\theta})_1    \asymp  2^{ -n  r_1} n^{(\nu - 1)(\frac{1}{p^*} - \frac{1}{\theta})_{+}}.
\end{equation}
}

In the case $1  < p  \leq 2$,   Theorem G   was proved in \cite{Temlyakov1989}  for $\theta = \infty$,  and in \cite{Galeev1990,Romanyuk2008}  for  $1 \leq \theta < \infty$.

For $2  < p  \leq \infty$ and $\theta = \infty$,  the  upper  bounds  in  Theorem G  follow from  the  upper  bounds  for $\mathscr{E}_{Q^{{\boldsymbol{\gamma}}'}_n} ( H^{\boldsymbol{r}}_2)_2$, see \cite{Bugrov1964}.   The   lower  bounds  are nontrivial. They   follow  from  the  corresponding  lower  bounds  for  the  Kolmogorov widths  $d_M(H^{\boldsymbol{r}}_{\infty}, L_1)$,  which,  as it  was  observed  in \cite{Belinskii1990},   are derived  from  the  lower  bounds  for the  entropy   numbers $\varepsilon_M(H^{\boldsymbol{r}}_\infty, L_1)$   from \cite{Temlyakov1990}.

If  $2  < p  \leq \infty$ and
$1 \leq \theta < \infty$, the upper estimate for the quantity $\mathscr{E}_{Q^{{\boldsymbol{\gamma}}'}_n} ( B^{\boldsymbol{r}}_{p,\theta})_1$
follows from Theorem B as $p=2$. It is due to the relation $\mathscr{E}_{Q^{{\boldsymbol{\gamma}}'}_n} ( B^{\boldsymbol{r}}_{p,\theta})_1  \ll \mathscr{E}_{Q^{{\boldsymbol{\gamma}}'}_n} ( B^{\boldsymbol{r}}_{2,\theta})_2$, $2 < p \leq \infty$.

The  lower estimates for the quantity $E_{Q^{{\boldsymbol{\gamma}'}}_n} ( B^{\boldsymbol{r}}_{p,\theta})_1 $,  $2  < p  \leq \infty$, in the case $1 \leq \theta \leq 2$  were obtained in \cite{Romanyuk2008}, and for  $2 < \theta < \infty$  they follow from the estimate for the Kolmogorov width    $d_M(B^{\boldsymbol{r}}_{\infty, \theta}, L_1)$, see \cite{Romanyuk2015}.

\begin{remark}
   Comparing the estimates (\ref{eqno34}) with (\ref{eqno35.1}) -- (\ref{eqno36}), we conclude that
the corresponding approximation characteristics for the classes  $B^{\boldsymbol{r}}_{p,\theta}$,
$r_1>0$, $1\leq \theta\leq \infty$  
in the spaces  $B_{q,1}$ and  $L_q$ for $1\leq q< p\leq \infty$ and $d\geq 2$ coincide in order only either in the case $\theta = 1$  or  $\nu =1$. 
\end{remark}

\begin{remark}
It is worth noting, that in the univariate case these
approximation characteristics in the spaces
$B_{q,1}$  and $L_q$ coincide in order for all values of the parameters $p$, $q$  and $\theta$, see \cite{Romanyuk_Romanyuk_Pozharska_Hembarska2023}.
\end{remark}

\begin{remark}\label{summung_up_best_approx}
    Summing up all of the results from Section \ref{sec2:App_shFs}, we get the following statement.

    Let $d\geq 2$, $1\leq \theta\leq \infty$. Then for $r>(1/p-1/q)_+$ it holds
$$
 E_{Q^{{\boldsymbol{\gamma}^*}}_n} ( B^{\boldsymbol{r}}_{p,\theta})_{B_{q,1}} 
  \asymp \mathscr{E}_{Q^{{\boldsymbol{\gamma}}^*}_n} ( B^{\boldsymbol{r}}_{p,\theta})_{B_{q,1}}
 \asymp  2^{ -n \left(  r_1-(\frac{1}{p}-\frac{1}{q})_+\right)} n^{(\mu^*-1)(1 - \frac{1}{\theta})},
$$
    where 
    \begin{itemize}
        \item in the case $1<p<q<\infty$ we put ${\boldsymbol{\gamma}}^* = \boldsymbol{\gamma}$, $\mu^* = \nu$;
       
        \item in the cases  $1<p=q<\infty$ and $1\leq q <p \leq \infty$ we put
        either ${\boldsymbol{\gamma}}^* = \boldsymbol{\gamma}'$, $\mu^* = \nu$  or ${\boldsymbol{\gamma}}^* = \boldsymbol{\gamma}$, $\mu^* = d$.
\end{itemize}
If $p=q\in \{1, \infty\}$, then 
    $$
     E_{Q^{{\boldsymbol{\gamma}}*}_n} ( B^{\boldsymbol{r}}_{p,\theta})_{B_{p,1}}   \asymp  2^{ -n  r_1} n^{(\mu^* - 1)(1 - \frac{1}{\theta})}
    $$ 
     with 
 either ${\boldsymbol{\gamma}}^* = \boldsymbol{\gamma}'$, $\mu^* = \nu$  or ${\boldsymbol{\gamma}}^* = \boldsymbol{\gamma}$, $\mu^* = d$.
    
\end{remark}

\section{Widths and entropy numbers}\label{sec3:widths_en}

In this part of the paper, the main attention is paid to getting the exact order estimates for the Kolmogorov widths and entropy numbers for the classes  $B^{\boldsymbol{r}}_{p,\theta}$  in the space $B_{q,1}$. In addition, using the obtained estimates for the Kolmogorov widths in combination with the results from Section \ref{sec2:App_shFs}, we write down the exact order estimates for the linear widths of the investigated function classes in the same space.

As a result of research, we found out that the orders of the Kolmogorov and linear widths in the considered cases are realized by the subspace of trigonometric polynomials with spectrum in the step hyperbolic crosses.

Let us define the corresponding approximation  characteristics.

Let $\mathscr{Y}$  be  a  normed  space  with  the  norm $\|\cdot\|_{\mathscr{Y}}$,
$\mathscr{L}_M(\mathscr{Y})$  be  the  set  of all  subspaces  of  dimension  at   most    $M$ in  the  space  $\mathscr{Y}$ ,
 and  $W$   be  a centrally-symmetric set in $\mathscr{Y}$.

The quantity
$$
d_M(W, \mathscr{Y}) := \inf\limits_{L_M \in \mathscr{L}_M(\mathscr{Y})} \sup\limits_{w \in W} \inf\limits_{u \in L_M}\| w - u\|_{\mathscr{Y}}
$$
is called the  Kolmogorov $M$-width of the  set  $W$ in the space $\mathscr{Y}$.

The  width  $d_M(W, \mathscr{Y})$ was  introduced  in 1936  by   A.N. Kolmogorov \cite{Kolmogorov1936}.

Let  $ \mathscr{Y}$   and  $\mathscr{Z}$  be  normed  spaces  and  $L(\mathscr{Y},  \mathscr{Z})$  be  a  set  of  linear  continuous  mappings  of  $\mathscr{Y}$  into  $\mathscr{Z}$.

The quantity
$$
\lambda_M(W, \mathscr{Y}) :=  \mathop{\inf_{L_M \in \mathscr{L}_M(\mathscr{Y})}} \limits_{\Lambda \in L(\mathscr{Y}, L_M)} \sup\limits_{w \in W} \|w - \Lambda w\|_{\mathscr{Y}},
$$
where 
 $L(\mathscr{Y}, L_M)$ denotes linear  continuous  operators  that  map  from  $\mathscr{Y}$  into   $L_M$,  is  called  the  linear $M$-width. It was  introduced  in 1960   by  V.M. Ti\-kho\-mi\-rov \cite{Tikhomirov1960}.

From the  definitions  of  widths,  it follows that 
\begin{equation}\label{eqno38}
d_M(F, \mathscr{Y}) \leq \lambda_M(F, \mathscr{Y}).
\end{equation}

The  history of  investigation  of  widths  for  different  function  classes  of  periodic  multivariate functions  is given in the monographs  
\cite{Dung_Temlyakov_Ullrich2019,Romanyuk2012,Temlyakov1989_121p,Temlyakov1993,Temlyakov2018,Trigub_Belinsky2004}



Let  $\mathscr{X}$  be    a Banach  space  and  let $B_{\mathscr{X}}(y, R)$  be  a ball  in $\mathscr{X}$   of  radius~$R$   centered  at  a point $y$, i.e.,
$$
B_{\mathscr{X}}(y, R) :=  \{x\in \mathscr{X}\colon \,\, \| {x}  -  y  \|_{\mathscr{X}} \leq R  \}.
$$
For a compact  set $A$  and $\varepsilon > 0$, we   introduce  the  number
\begin{equation}\label{eqno39}
N_{\varepsilon}(A, \mathscr{X}) : = \min \Big\{ n\colon \quad \exists y^1, \ldots, y^n \in \mathscr{X}\colon \quad
 A \subseteq \bigcup\limits^n_{j =1} B_{\mathscr{X}}(y^j, \varepsilon) \Big\}.
\end{equation}
Then the  quantity  (see. e.g., \cite{Kolmogorov_Tikhomirov1959})
$$
H_{\varepsilon}(A, \mathscr{X}) := \log N_{\varepsilon}(A,\mathscr{X})
$$
is  called  the   $\varepsilon$-entropy of the    set   $A$  with  respect to  the  Banach space $\mathscr{X}$   (here and   in what follows, $\log := \log_2$).

The  $\varepsilon$-entropy  of the  set $A$  is  closely  connected  to  
its  entropy  numbers  $\varepsilon_k(A,\mathscr{X})$ \ (see. e.g., \cite{Hollig}):
$$
\varepsilon_k(A,\mathscr{X}) := \inf \Big\{\varepsilon\colon \quad \exists y^1, \ldots, y^{2^k}   \in \mathscr{X}\colon \quad
 A \subseteq \bigcup\limits^{2^k}_{j=1} B_{\mathscr{X}}(y^j, \varepsilon)\Big\}.
$$

Note that  directly  from  the  definitions  of the  quantities $H_{\varepsilon}(A, \mathscr{X})$  and $ \varepsilon_k (A, \mathscr{X})$,   we conclude that  if  $H_{\varepsilon}(A, \mathscr{X}) \leq  k$
  then   $\varepsilon_k(A, \mathscr{X}) \leq  \varepsilon$  and,  vice  versa,  the  estimate
  $\varepsilon_k(A, \mathscr{X}) \leq  \varepsilon$     yields  the  estimate   $H_{\varepsilon}(A, \mathscr{X}) \leq  k$.

In other  words,  if  $k  < H_{\varepsilon}(A, \mathscr{X})  \leq  k +1 $  then
$\varepsilon_{k+1}(A, \mathscr{X})  \leq \varepsilon \leq \varepsilon_k(A, \mathscr{X})$. These relations  enable  one  to  establish  estimates for  the  $\varepsilon$-entropy  $H_{\varepsilon}(A, \mathscr{X})$   from  the  estimates  for  the corresponding entropy  numbers
$\varepsilon_k (A, \mathscr{X})$.

The above defined characteristics on the classes of periodic multivariate functions $W^{\boldsymbol{r}}_{p,\alpha}$, $H^{\boldsymbol{r}}_p$  and $B^{\boldsymbol{r}}_{p,\theta}$
in the space $L_q$ were investigated in a number of papers. The detailed bibliography can be found in the monographs \cite{Dung_Temlyakov_Ullrich2019,Temlyakov2018,Trigub_Belinsky2004}.

In addition, we highlight the recent papers \cite{Mayer_Ullrich2020,Romanyuk2019,Temlyakov_Ullrich2020}, where the questions concerning estimates of the Kolmogorov width and entropy numbers of the mentioned
 classes of functions  in the space $L_q$ were investigated, and also \cite{Kashin_Temlyakov1999,Kashin_Temlyakov,Romanyuk_Yanchenko2022__UMZ_74,Romanyuk_Yanchenko2023}, where similar questions were considered on the classes of quasi-continuous functions  $Q C$. The norm in this space is close in some sense to the $L_{\infty}$-norm, and 
 it is connected in a certain way with the $B_{\infty,1}$-norm (see, e.g., \cite{Kashin_Temlyakov} for the details).

The  next  statement will play an important role in obtaining the estimates for the entropy numbers and Kolmogorov width of the classes  $B^{\boldsymbol{r}}_{p,\theta}$  in the space   $B_{q,1}$. It is a  corollary  from the  Carl's  inequality (see, e.g., \cite{Carl1981}).

\textbf{Lemma B.} \cite{Temlyakov1996_East_J}. {\it
 Let  $A$   be  a  compact  set  in a  separable  Banach  space $\mathscr{X}$. Assume  that  for  a  pair  of  numbers  $(a,b)$,  where either
 $a > 0$, $b \in \mathbb{R}$  or $a = 0$, $b < 0$,  the  relations
$$
d_m(A, \mathscr{X})  \ll m^{-a}(\log m)^b, \qquad \varepsilon_m (A, \mathscr{X})  \gg m^{-a}(\log m)^b
$$
are  true.  Then, it holds
$$
\varepsilon_m(A, \mathscr{X}) \asymp d_m(A, \mathscr{X}) \asymp     m^{-a}(\log m)^b.
$$
}

Before we formulate and prove the results, let us introduce some further notation, taken from \cite{Temlyakov1990}.

For all ${\boldsymbol{s}}
\in \mathbb{N}^d$, let us define
$$
\overline{\rho}({\boldsymbol{s}})  := \{ \boldsymbol{k}
\in \mathbb{N}^d \colon \  2^{s_j -1} \leq  k_j < 2^{s_j}, \ j = 1, \dots, d \},
$$
and  consider the set of 
trigonometric polynomials
$$
T(\overline{\rho}({\boldsymbol{s}})) := \Big\{ t\colon \ t(\boldsymbol{x}) = \sum\limits_{\boldsymbol{k} \in \overline{\rho}({\boldsymbol{s}})} \widehat{t}(\boldsymbol{k}) e^{i(\boldsymbol{k},\boldsymbol{x})}, \ \boldsymbol{x} \in \mathbb{R}^d  
\Big\},
$$
where $\widehat{t}(\boldsymbol{k})$ are the Fourier coefficients of $t$.

Every 
$t \in T(\overline{\rho}({\boldsymbol{s}}))$, $s_j \geq 2$, $j = 1, \dots, d$,   can  be  represented  in  the  form
$$
t(\boldsymbol{x}) = e^{i(\boldsymbol{k}^{\boldsymbol{s}}, \boldsymbol{x})} t^1(\boldsymbol{x}),
$$
where    $\boldsymbol{k}^{\boldsymbol{s}} = (k^{s_1}_1, \ldots, k^{s_d}_d)$, $k^{s_j}_j = 2^{s_j -1} + 2^{s_j - 2}$, $j = 1, \dots, d$,   and  $t^1(\boldsymbol{x})$    is    a   polynomial   of   degree $2^{s_j -2}$
   with  respect  to  the  variable $x_j$, $j = 1, \dots, d$.

For every ${\boldsymbol{m}}
\in \mathbb{Z}_{+}^d$,
i.e., with nonnegative integer coordinates,
we 
put
$$
R T({\boldsymbol{m}}) := \Bigg\{ t\colon \quad t(\boldsymbol{x}) = \mathop{ \sum_{|k_j| \leq m_j}}\limits_{j = 1, \dots, d} \widehat{t}(\boldsymbol{k}) e^{i(\boldsymbol{k},\boldsymbol{x}) } , \ \boldsymbol{x} \in \mathbb{R}^d \Bigg\}.
$$
Let $\boldsymbol{2}=(2,\dots, 2)\in \mathbb{N}^d$ and
$$
T'(\overline{\rho}({\boldsymbol{s}})) := \Big\{ t\colon \ t(\boldsymbol{x}) =  e^{i(\boldsymbol{k}^{\boldsymbol{s}},\boldsymbol{x})} t^1(\boldsymbol{x}), \,\, t^1 \in R T(2^{{\boldsymbol{s}} - \boldsymbol{2}})\Big\}.
$$

For  even  $n\in \mathbb{N}$,
we    define  the  sets
\begin{gather*}
\Omega_n := \{  {\boldsymbol{s}}\in \mathbb{N}^d \colon \, 
(\boldsymbol{s},\boldsymbol{1})
= n, \, s_j  \mbox{ are even  numbers}, \, j = 1, \dots, d\}, \
 Q'_n := \bigcup\limits_{{\boldsymbol{s}} \in \Omega_n} \overline{\rho}({\boldsymbol{s}}),
\\
T'(Q'_n) := \bigcup\limits_{{\boldsymbol{s}} \in \Omega_n} T'(\overline{\rho}({\boldsymbol{s}}))  = \{ t\colon \quad t(\boldsymbol{x}) = \sum\limits_{{\boldsymbol{s}} \in \Omega_n} e^{i(\boldsymbol{k}^{\boldsymbol{s}}, \boldsymbol{x})}t^1_{\boldsymbol{s}}(\boldsymbol{x}), \quad t^1_{\boldsymbol{s}} \in R T(2^{{\boldsymbol{s}} - \boldsymbol{2}})\},
\\
T'(Q'_n)_{\infty} := \{ t \in T'(Q'_n)\colon \quad   \|t^1_{\boldsymbol{s}}\|_{\infty} \leq 1\}.
\end{gather*}

In what follows, we 
get the lower estimate for the entropy numbers of the classes  $B^{\boldsymbol{r}}_{\infty, \theta}$,
$1\leq \theta\leq \infty$,  in the space $B_{1,1}$. It is an expansion
of the
 corresponding
result by V.N. Temlyakov \cite{Temlyakov1990}
for the classes  $H^{\boldsymbol{r}}_{\infty}$.

\begin{theorem}\label{Thm5}
 Let  $ r_1 > 0$   and  $1 \leq \theta \leq \infty$.   Then it holds
\begin{equation*}
\varepsilon_M ( B^{\boldsymbol{r}}_{\infty,\theta}, B_{1,1}) \gg   M^{ - r_1} (\log^{\nu - 1} M)^{ r_1 + 1 - \frac{1}{\theta}}.
\end{equation*}
\end{theorem}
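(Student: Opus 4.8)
The plan is to follow the scheme of V.~N.~Temlyakov~\cite{Temlyakov1990} developed for the classes $H^{\boldsymbol{r}}_\infty=B^{\boldsymbol{r}}_{\infty,\infty}$, the only genuinely new point being that the test functions must be normalized with an extra $\theta$\nobreakdash-dependent factor. First I would reduce to the case $\nu=d$, i.e.\ $\boldsymbol{r}=r_1\boldsymbol{1}$: freezing $s_{\nu+1},\dots,s_d$ at a fixed small even value one has, on the corresponding subspace of trigonometric polynomials, $2^{(\boldsymbol{s},\boldsymbol{r})}\asymp 2^{r_1(s_1+\dots+s_\nu)}$, so a lower bound for the $\nu$\nobreakdash-dimensional problem with isotropic mixed smoothness $r_1$ is a lower bound for $\varepsilon_M(B^{\boldsymbol{r}}_{\infty,\theta},B_{1,1})$. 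From now on $d=\nu$ and all smoothness exponents equal $r_1$.

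For even $n$ consider the finite-dimensional space $\Sigma_n:=\operatorname{span}T'(Q'_n)$, $N_n:=\dim\Sigma_n=|Q'_n|\asymp 2^{n}n^{d-1}$, and inside $B^{\boldsymbol{r}}_{\infty,\theta}$ the set $W_n:=\lambda_n\,T'(Q'_n)_\infty$ with $\lambda_n:=c_0\,2^{-n r_1}\,n^{-(d-1)/\theta}$, where $c_0>0$ is a small constant. By the construction of $T'(Q'_n)$ the blocks $\overline{\rho}(\boldsymbol{s})$, $\boldsymbol{s}\in\Omega_n$, are pairwise spectrally separated, so that for $t=\sum_{\boldsymbol{s}\in\Omega_n}e^{i(\boldsymbol{k}^{\boldsymbol{s}},\cdot)}t^1_{\boldsymbol{s}}$ one has $A_{\boldsymbol{s}}(t)=e^{i(\boldsymbol{k}^{\boldsymbol{s}},\cdot)}t^1_{\boldsymbol{s}}$ for $\boldsymbol{s}\in\Omega_n$ and $A_{\boldsymbol{s}}(t)=0$ outside a bounded neighbourhood of $\Omega_n$ (the boundedly many neighbouring blocks only affect constants). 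Together with the norm representation~(\ref{norm_via_A_s_notinfty}) this gives, for $t\in T'(Q'_n)_\infty$,
\[
\|t\|_{B^{\boldsymbol{r}}_{\infty,\theta}}\ \asymp\ 2^{n r_1}\Big(\sum_{\boldsymbol{s}\in\Omega_n}\|t^1_{\boldsymbol{s}}\|_\infty^{\theta}\Big)^{1/\theta}\ \ll\ 2^{n r_1}(n^{d-1})^{1/\theta}
\]
(and $\asymp 2^{n r_1}$ when $\theta=\infty$), whence $W_n\subset B^{\boldsymbol{r}}_{\infty,\theta}$ for a suitable $c_0$; in the same way $\|t\|_{B_{1,1}}\asymp\sum_{\boldsymbol{s}\in\Omega_n}\|t^1_{\boldsymbol{s}}\|_1$ for every $t\in\Sigma_n$.

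The core of the argument is the standard volume lower bound for entropy numbers (a consequence of the Carl--Pietsch inequality, the same circle of ideas as Lemma~B): since $W_n\subset B^{\boldsymbol{r}}_{\infty,\theta}$ and $W_n\subset\Sigma_n$, for $M\le c_1 N_n$ one has
\[
\varepsilon_M(B^{\boldsymbol{r}}_{\infty,\theta},B_{1,1})\ \gg\ \varepsilon_M\big(W_n,(\Sigma_n,\|\cdot\|_{B_{1,1}})\big)\ \gg\ \Big(\frac{\operatorname{vol}W_n}{\operatorname{vol}U_n}\Big)^{1/N_n},
\]
where $U_n$ is the unit ball of $(\Sigma_n,\|\cdot\|_{B_{1,1}})$. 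Now $W_n$ is (up to the factor $\lambda_n$) the Cartesian product over $\boldsymbol{s}\in\Omega_n$ of the unit balls of the $L_\infty$\nobreakdash-norm on $RT(2^{\boldsymbol{s}-\boldsymbol{2}})$, while $U_n$ is, up to equivalence, the $\ell_1$\nobreakdash-sum of the unit balls of the $L_1$\nobreakdash-norm on the same spaces. Using the known volume estimates $\operatorname{vol}\{p\colon\|p\|_\infty\le1\}^{1/\dim}\asymp\dim^{-1/2}$ and $\operatorname{vol}\{p\colon\|p\|_1\le1\}^{1/\dim}\asymp\dim^{-1/2}$ for trigonometric polynomials (the $L_\infty$\nobreakdash-estimate being the Kashin-type ``flat polynomial'' input underlying Temlyakov's method; alternatively one exhibits directly an exponentially large $B_{1,1}$\nobreakdash-separated family inside $B^{\boldsymbol{r}}_{\infty,\theta}$ via Rudin--Shapiro / random-sign constructions), together with the Stirling estimate $\big((D!)^{K}/(KD)!\big)^{1/KD}\asymp 1/K$ for the volume of an $\ell_1$\nobreakdash-sum of $K$ bodies of dimension $D$ — here $D\asymp 2^{n}$, $K=|\Omega_n|\asymp n^{d-1}$ — one gets $(\operatorname{vol}W_n)^{1/N_n}\asymp\lambda_n 2^{-n/2}$ and $(\operatorname{vol}U_n)^{1/N_n}\asymp 2^{-n/2}n^{-(d-1)}$, so the ratio is $\asymp\lambda_n n^{d-1}\asymp 2^{-n r_1}n^{(d-1)(1-1/\theta)}$.

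Finally, given $M$ choose the smallest even $n$ with $c_1 N_n\ge M$; then $N_n\asymp M$, $n\asymp\log M$, $2^{-n r_1}\asymp M^{-r_1}(\log M)^{r_1(d-1)}$, and by monotonicity of entropy numbers in $M$
\[
\varepsilon_M(B^{\boldsymbol{r}}_{\infty,\theta},B_{1,1})\ \gg\ 2^{-n r_1}n^{(d-1)(1-1/\theta)}\ \asymp\ M^{-r_1}(\log^{d-1}M)^{r_1+1-1/\theta},
\]
which is the assertion after replacing $d$ by $\nu$. I expect the main obstacle to be the two volume estimates for balls of trigonometric polynomials (or, in the separated-set version, producing the required exponentially large $B_{1,1}$\nobreakdash-separated family of functions while keeping them inside the class), together with the careful bookkeeping of the factor $n^{-(d-1)/\theta}$ in $\lambda_n$, which is precisely what upgrades Temlyakov's $H^{\boldsymbol{r}}_\infty$\nobreakdash-bound to the stated $B^{\boldsymbol{r}}_{\infty,\theta}$\nobreakdash-bound.
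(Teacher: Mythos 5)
Your construction of the test set is exactly the paper's: the same $T'(Q'_n)_\infty$, the same $\theta$-dependent normalization $\lambda_n=c_0\,2^{-nr_1}n^{-(d-1)/\theta}$, and the same verification (via (\ref{norm_via_A_s_notinfty}) and $\|A_{\boldsymbol{s}}\|_1\ll 1$) that $\lambda_n T'(Q'_n)_\infty$ sits inside a multiple of $B^{\boldsymbol{r}}_{\infty,\theta}$. Where you diverge is in how the $B_{1,1}$-entropy of this set is bounded from below. The paper does not redo any volume computation: it quotes Temlyakov's $L_2$ entropy lower bound $\varepsilon_M(T'(Q'_n)_\infty 2^{-nr_1},L_2)\gg M^{-r_1}(\log^{d-1}M)^{r_1+1/2}$ as a black box, extracts from it $2^M$ functions that are $L_2$-separated at scale $2^{-nr_1}n^{(d-1)(1/2-1/\theta)}$, and then converts $L_2$-separation into $B_{1,1}$-separation through the elementary inequality $\|f\|_2^2\le\max_{\boldsymbol{s}'}\|A_{\boldsymbol{s}'}(f)\|_\infty\sum_{\boldsymbol{s}}\|A_{\boldsymbol{s}}(f)\|_1\ll\lambda_n\|f\|_{B_{1,1}}$, valid on $\lambda_nT'(Q'_n)_\infty$; dividing $\|f_i-f_j\|_2^2$ by $\lambda_n$ is what produces the extra $n^{(d-1)/2}$ and upgrades the exponent from $1/2-1/\theta$ to $1-1/\theta$. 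You instead run the volume comparison directly in the $B_{1,1}$ geometry of $\Sigma_n$, and the gain appears as the $\ell_1$-sum volume factor $1/K\asymp n^{-(d-1)}$ versus the $N_n^{-1/2}$ of the Euclidean ball. Your route is coherent and gives the right answer, but it is not cheaper: the lower volume estimate $(\operatorname{vol}\{p\in RT(\boldsymbol{m})\colon\|p\|_\infty\le1\})^{1/\dim}\gg\dim^{-1/2}$ is precisely the hard Kashin--Temlyakov input sitting inside the proof of the $L_2$ bound you would otherwise cite, and the companion upper estimate $(\operatorname{vol}\{p\colon\|p\|_1\le1\})^{1/\dim}\ll\dim^{-1/2}$ is also nontrivial (it does not follow from Nikol'skii's inequality; one gets it from the $L_\infty$ bound via the inverse Santal\'o inequality, or from Temlyakov's volume estimates for parallelepipeds of frequencies). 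In effect you unwind the black box and redo it with $B_{1,1}$ in place of $L_2$, which is more transparent about where the logarithm comes from but longer given the literature. Two details to nail down if you write this up: the two-sided equivalence $\|t\|_{B_{1,1}}\asymp\sum_{\boldsymbol{s}\in\Omega_n}\|t^1_{\boldsymbol{s}}\|_1$ on $\Sigma_n$ needs, for the lower bound, an $L_1$-bounded multiplier recovering $t^1_{\boldsymbol{s}}$ from $t$ (the blocks $A_{\boldsymbol{s}'}$ are not exact spectral projections onto $\overline{\rho}(\boldsymbol{s})$); and the reduction to $\nu=d$ should be phrased, as in the paper, simply as ``it suffices to consider $\nu=d$'' since the construction only ever uses the first $\nu$ coordinates.
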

\begin{proof}
In the case $\theta = \infty$, as it was noted above, the estimate was obtained in  \cite{Temlyakov1990}. Therefore,
 we assume that 
 $1\leq \theta <\infty$. Besides, it  is  sufficient  to  consider the  case $\nu = d$.

We will use some relations from  the proof of Theorem 2.2 in \cite{Temlyakov1990}, in particular, the estimate
\begin{equation}\label{eqno41}
\varepsilon_M ( T'(Q'_n)_\infty \, 2^{-n r_1}, L_2) \gg   M^{ - r_1} (\log^{d - 1} M)^{ r_1 +  \frac{1}{2}},
\end{equation}
where $M = |Q^{'}_n| \asymp 2^n n^{d - 1}$.

Since the  embedding
$$
T'(Q'_n)_{\infty}\, 2^{-n r_1} \subset C_7 H^{\boldsymbol{r}}_{\infty}, \quad C_7 > 0,
$$
is true,  for   $f \in T'(Q'_n)_{\infty} \, 2^{-n r_1}$
it holds $\|A_{\boldsymbol{s}}(f)\|_{\infty} \ll 2^{-({\boldsymbol{s}},r)}$, $s \in  \Omega_n$
(see (\ref{norm_via_A_s_infty})).

Therefore,  for  $f \in  T'(Q'_n)_{\infty} \, 2^{-n r_1}$ from (\ref{norm_via_A_s_notinfty}) and (\ref{norm_As_1}) we  get
\begin{align*}
\|f\|_{B^{\boldsymbol{r}}_{\infty, \theta}} & \asymp   \Bigg(  \sum\limits_{{\boldsymbol{s}} \in \Omega_n}  2^{({\boldsymbol{s}},{\boldsymbol{r}})\theta} \|A_{\boldsymbol{s}}(f)\| ^{\theta}_{\infty} \Bigg)^{\frac{1}{\theta}}
\ll \Bigg(  \sum\limits_{{\boldsymbol{s}} \in \Omega_n}  2^{({\boldsymbol{s}},{\boldsymbol{r}})\theta} \|A_{\boldsymbol{s}}\|^{\theta}_1\,\,  \|f\|^{\theta}_{\infty} \Bigg)^{\frac{1}{\theta}}
\\
&  \ll \Bigg(  \sum\limits_{{\boldsymbol{s}} \in \Omega_n}  2^{({\boldsymbol{s}},{\boldsymbol{r}})\theta}\,\, \|f\|^{\theta}_{\infty} \Bigg)^{\frac{1}{\theta}} \ll   \Bigg(  \sum\limits_{{\boldsymbol{s}} \in \Omega_n}  1 \Bigg)^{\frac{1}{\theta}} \asymp n^{\frac{d -1}{\theta}},
\end{align*}
and thus
\begin{equation}\label{eqno42}
T'(Q'_n)_{\infty} \, 2^{-n r_1} n^{ - \frac{d -1}{\theta}} \subset  C_8 B^{\boldsymbol{r}}_{\infty, \theta}, \quad 1 \leq \theta < \infty, \quad  C_8 > 0. \end{equation}
Hence, according  to (\ref{eqno41}), in view of $M \asymp 2^n n^{d - 1}$, i.e., $n \asymp \log M$,  we  can  write
\begin{align}
\varepsilon_M ( T'(Q'_n)_{\infty}\,\,  2^{-n r_1}\,\,  n^{ - \frac{d -1}{\theta}}, L_2) &
\gg M^{ - r_1} (\log^{d - 1} M)^{ r_1 +  \frac{1}{2} - \frac{1}{\theta}}
\nonumber
\\
& 
\asymp 2^{-n r_1} n^{(d - 1)(\frac{1}{2} - \frac{1}{\theta})}.   \label{eqno43}
\end{align}

Let $A$   be  a  compact  set  and  $M_{\varepsilon}(A, \mathscr{X})$    be   the  maximum  number  of  points $x^j \in A$  such  that  $\|x^i - x^j\|_{\mathscr{X}} > \varepsilon$, $i \neq j$; $N_{\varepsilon}(A, \mathscr{X})$  be  a  number  defined  by  the formula  (\ref{eqno39}).

It is well known that 
\begin{equation}\label{eqno44}
N_\varepsilon (A, \mathscr{X}) \leq M_\varepsilon (A, \mathscr{X}) \leq N_{\varepsilon/2} (A, \mathscr{X}).
\end{equation}
The relations (\ref{eqno43}) and (\ref{eqno44}) yield that in   $T'(Q'_n)_{\infty}\,  2^{-n r_1} n^{ - \frac{d -1}{\theta}}$  there exist $2^M$  functions  $\{ f_j \}^{2^M}_{j=1}$  such that for $i \neq j$  it holds
\begin{equation}\label{eqno45}
\|f_i - f_j\|_2 \gg 2^{-n r_1} n^{(d - 1)(\frac{1}{2} - \frac{1}{\theta})}.
\end{equation}
Further
we  
show that
\begin{equation}\label{eqno46}
\|f_i - f_j\|_{B_{1,1}} \gg 2^{-n r_1} n^{(d - 1)(1 - \frac{1}{\theta})}.
\end{equation}

Indeed, taking into account (\ref{eqno45}) and the definition (\ref{norm_Bq1_via_As}) of the norm in the space $B_{1,1}$, for $f \in T'(Q'_n)_{\infty}$ we obtain
\begin{align}
n^{d -1} \ll \|f\|^2_2 & = \sum\limits_{{\boldsymbol{s}} \in \Omega_n} \, \| \sum\limits_{\|{\boldsymbol{s}} - {\boldsymbol{s}}'\|_{\infty} \leq 1} A_{{\boldsymbol{s}}'}(f) \|^2_2 \leq
  \sum\limits_{{\boldsymbol{s}} \in \Omega_n}  \sum\limits_{\|{\boldsymbol{s}} - {\boldsymbol{s}}'\|_{\infty} \leq 1} \|A_{{\boldsymbol{s}}'}(f) \|^2_2
\nonumber\\
& \leq  \sum\limits_{{\boldsymbol{s}} \in \Omega_n}  \sum\limits_{\|{\boldsymbol{s}} - {\boldsymbol{s}}'\|_{\infty} \leq 1} \|A_{{\boldsymbol{s}}'}(f) \|_1 \, \|A_{{\boldsymbol{s}}'}(f)\|_\infty
\nonumber\\
& \leq  \mathop{\max_{{\boldsymbol{s}} \in \Omega_n}} \limits_{\|{\boldsymbol{s}} - {\boldsymbol{s}}'\|_{\infty} \leq 1} \|A_{{\boldsymbol{s}}'}(f) \|_\infty  \sum\limits_{{\boldsymbol{s}}\in \mathbb{N}^d} \|A_{{\boldsymbol{s}}}(f) \|_1 \ll \|f\|_{B_{1,1}}.
\label{eqno47}
\end{align}
Combining (\ref{eqno45}) with (\ref{eqno47}), we get (\ref{eqno46}).

Hence, from  (\ref{eqno42}) and (\ref{eqno46}), in view of $M \asymp 2^n n^{d - 1}$, we obtain the estimate
\begin{align*}
\varepsilon_M ( B^{\boldsymbol{r}}_{\infty,\theta}, B_{1,1})& \gg
\varepsilon_M (T'(Q'_n)_{\infty} \, 2^{-n r_1} n^{ - \frac{d -1}{\theta}}, B_{1,1})
\gg   
2^{-n r_1} n^{(d - 1)(1 - \frac{1}{\theta})}
\\
&    \asymp     M^{ - r_1} (\log^{d - 1} M)^{ r_1 + 1 - \frac{1}{\theta}}.
\end{align*}

Theorem \ref{Thm5} is proved.
\end{proof}

In the next part, we prove the exact-order estimates for the Kolmogorov, linear widths and entropy numbers of the classes $B^{\boldsymbol{r}}_{p,\theta}$  in the space  $B_{q,1}$.  

\begin{theorem}\label{Thm8}
Let  $d \geq 2$, $r_1 > 0$, $1 \leq q \leq  p \leq \infty$, $1 \leq \theta \leq \infty$.  Then  it holds
\begin{equation}\label{eqno57}
\varepsilon_M ( B^{\boldsymbol{r}}_{p,\theta},  B_{q,1}) \asymp  d_M ( B^{\boldsymbol{r}}_{p, \theta}, B_{q,1})   \asymp  \lambda_M ( B^{\boldsymbol{r}}_{p, \theta}, B_{q,1})  \asymp
  M^{ - r_1} (\log^{\nu - 1} M)^{ r_1 + 1 - \frac{1}{\theta}}.
\end{equation}
\end{theorem}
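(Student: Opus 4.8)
The plan is to split the chain of equivalences into an upper bound valid for all three quantities and a matching lower bound for the entropy numbers, then invoke Lemma~B (Carl's inequality) to close the loop. Since $d_M \leq \lambda_M$ by \eqref{eqno38}, and since for each fixed $M$ the linear width is realized by (or bounded above using) a concrete linear operator, it suffices to prove the upper estimate $\lambda_M(B^{\boldsymbol{r}}_{p,\theta}, B_{q,1}) \ll M^{-r_1}(\log^{\nu-1}M)^{r_1+1-1/\theta}$ and the lower estimate $\varepsilon_M(B^{\boldsymbol{r}}_{p,\theta}, B_{q,1}) \gg M^{-r_1}(\log^{\nu-1}M)^{r_1+1-1/\theta}$. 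The upper bound for $\lambda_M$ (hence for $d_M$ and, via a standard volumetric/covering argument, for $\varepsilon_M$) I would obtain by taking as approximating subspace the trigonometric polynomials with harmonics in a step hyperbolic cross $Q^{\boldsymbol{\gamma}'}_n$ of dimension $\dim T(Q^{\boldsymbol{\gamma}'}_n) \asymp 2^n n^{\nu-1} =: M$, and as linear operator an appropriate de la Vall\'ee-Poussin-type projection $f \mapsto t_n(f)$ in the spirit of \eqref{eqno27.1}. The error $\|f - t_n(f)\|_{B_{q,1}}$ for $f \in B^{\boldsymbol{r}}_{p,\theta}$ is then controlled exactly as in the proof of Theorem~\ref{Thm3} (using \eqref{norm_As_1}, the Nikol'skii inequality Theorem~A when $q<p$, H\"older in the summation index, and Lemma~A), yielding $\ll 2^{-nr_1} n^{(\nu-1)(1-1/\theta)}$; rewriting $2^{-nr_1} \asymp M^{-r_1} (\log M)^{r_1(\nu-1)}$ and $n \asymp \log M$ gives the claimed order. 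One must also check that $\varepsilon_M$ enjoys the same upper bound: this is routine since the image of $B^{\boldsymbol{r}}_{p,\theta}$ under $t_n$ is a bounded subset of an $M$-dimensional space, whose $\varepsilon$-entropy at the relevant scale contributes only a power of $\log$ that is absorbed.

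For the lower bound I would reduce, as usual, to the extreme pair $p = \infty$, $q = 1$, since $B^{\boldsymbol{r}}_{\infty,\theta} \subset B^{\boldsymbol{r}}_{p,\theta}$ and $\|\cdot\|_{B_{1,1}} \ll \|\cdot\|_{B_{q,1}}$, so that $\varepsilon_M(B^{\boldsymbol{r}}_{p,\theta}, B_{q,1}) \gg \varepsilon_M(B^{\boldsymbol{r}}_{\infty,\theta}, B_{1,1})$. This last quantity is bounded below by Theorem~\ref{Thm5}, which already gives exactly $M^{-r_1}(\log^{\nu-1}M)^{r_1+1-1/\theta}$. (Here one should note that $\nu$ enters Theorem~\ref{Thm5} in the same way as in the target estimate, and the reduction to $\nu=d$ in that theorem together with the general ordering of $\boldsymbol{r}$ accounts for the $\nu$ in \eqref{eqno57}; for $p$ strictly between, the same $\nu$ survives because the lower bound comes from $B^{\boldsymbol{r}}_{\infty,\theta}$ and the upper bound from the hyperbolic cross $Q^{\boldsymbol{\gamma}'}_n$ whose combinatorics are governed by $\nu$ via Lemma~A.)

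Finally, with $d_M \ll M^{-r_1}(\log M)^{(\nu-1)(r_1+1-1/\theta)}$ and $\varepsilon_M \gg M^{-r_1}(\log M)^{(\nu-1)(r_1+1-1/\theta)}$ in hand, Lemma~B applies with $a = r_1 > 0$ and $b = (\nu-1)(r_1+1-1/\theta)$, forcing $\varepsilon_M \asymp d_M \asymp M^{-r_1}(\log^{\nu-1}M)^{r_1+1-1/\theta}$; combined with $d_M \leq \lambda_M$ and the upper bound for $\lambda_M$ proved in the first paragraph, this pins down all three characteristics and also shows that the hyperbolic-cross subspaces are optimal. The main obstacle is the upper bound for $\lambda_M$ in the space $B_{q,1}$ rather than $L_q$: one must make sure the de la Vall\'ee-Poussin operator is bounded $B_{q,1} \to B_{q,1}$ uniformly (which it is, by \eqref{norm_As_1} and an argument like \eqref{eqno4}), and that the passage from the $L_p$-blocks $\|A_{\boldsymbol{s}}(f)\|_p$ to the $B_{q,1}$-summation introduces only the factor $2^{({\boldsymbol{s}},\boldsymbol{1})(1/q-1/p)}$ when $q \le p$ — this is where Theorem~A is used and where the exponent $r_1$ (as opposed to $r_1 - 1/p + 1/q$) in the final answer is vindicated, because the step hyperbolic cross is built with weights $\boldsymbol{\gamma}'$ matching $\boldsymbol{r}/r_1$ rather than the shifted vector.
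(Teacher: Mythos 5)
Your proposal is correct and follows essentially the same route as the paper: the upper bound for $d_M$ (and $\lambda_M$) is taken from the best-approximation results of Section~\ref{sec2:App_shFs} with $M\asymp 2^n n^{\nu-1}$, the lower bound for $\varepsilon_M$ is obtained by reducing to $\varepsilon_M(B^{\boldsymbol{r}}_{\infty,\theta},B_{1,1})$ and invoking Theorem~\ref{Thm5}, and Lemma~B closes the chain. The only superfluous element is the separate covering argument sketched for the upper bound on $\varepsilon_M$, which is unnecessary once Lemma~B is applied.
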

\begin{proof} First we get the right order for  $d_M ( B^{\boldsymbol{r}}_{p, \theta}, B_{q,1})$ and $\varepsilon_M ( B^{\boldsymbol{r}}_{p,\theta},  B_{q,1})$ by using 
 Lemma B.

Indeed, on the one hand, the upper estimate for the Kolmogorov width    follows from Theorems~\ref{Thm1}, \ref{Thm3}  and \ref{Thm4} under the condition  $M \asymp 2^n n^{\nu - 1}$:
\begin{align*}
d_M ( B^{\boldsymbol{r}}_{p, \theta}, B_{q,1}) & \ll  E_{Q^{{\boldsymbol{\gamma}} '}_n}(B^{\boldsymbol{r}}_{p, \theta})_{B_{q,1}} \asymp 2^{- n r_1} n^{(\nu - 1)(1- \frac{1}{\theta})} \nonumber\\
&
\asymp M^{ - r_1} (\log^{\nu - 1} M)^{ r_1 + 1 - \frac{1}{\theta}}.
\end{align*}

On the other hand, 
the lower estimate for the entropy numbers  is a corollary from  Theorem \ref{Thm5}:
\begin{align*}
\varepsilon_M ( B^{\boldsymbol{r}}_{p,\theta}, B_{q,1})   & \geq \varepsilon_M ( B^{\boldsymbol{r}}_{p,\theta}, B_{1,1})   \geq \varepsilon_M ( B^{\boldsymbol{r}}_{\infty,\theta}, B_{1,1})
\nonumber\\
& \gg M^{ - r_1} (\log^{\nu - 1} M)^{ r_1 + 1 - \frac{1}{\theta}}.
\end{align*}


As for $\lambda_M ( B^{\boldsymbol{r}}_{p, \theta}, B_{q,1})$, here we note, that for 
$(p, q)\in \{(1, 1), (\infty, \infty)\}$
the upper estimate for the quantity  of the best approximation
$E_{Q^{{\boldsymbol{\gamma}}'}_n} ( B^{\boldsymbol{r}}_{p,\theta})_{B_{p,1}}$ from Theorem \ref{Thm3}
 is realized by a linear method, based on the polynomials (\ref{eqno27.1}).
    The corresponding lower estimate is due to the relation (\ref{eqno38}).

Theorem \ref{Thm8} is proved.
\end{proof}

\begin{remark}\label{Rem3}
The estimate (\ref{eqno57}) for the Kolmogorov width yields the lower bounds for $E_{Q^{{\boldsymbol{\gamma}} '}_n}(B^{\boldsymbol{r}}_{\infty, \theta})_{B_{\infty,1}}$ in Theorem \ref{Thm3} and for $
E_{Q^{{\boldsymbol{\gamma}} '}_n}(B^{\boldsymbol{r}}_{p, \theta})_{B_{q,1}}$, $1 \leq q < p \leq \infty$, in Theorem \ref{Thm4}. 
Indeed, for $M \asymp 2^n n^{\nu - 1}$ it holds
$$
E_{Q^{{\boldsymbol{\gamma}} '}_n}(B^{\boldsymbol{r}}_{p, \theta})_{B_{q,1}}  \gg
 d_M ( B^{\boldsymbol{r}}_{p, \theta}, B_{q,1})  \asymp 2^{- n r_1} n^{(\nu - 1)(1- \frac{1}{\theta})}.
 $$
 \end{remark}

Below we 
compare 
the result of Theorem  \ref{Thm8}
 with the estimates of the corresponding quantities in the space $L_q$.
We will consider several cases.

First, let $p=q$.

\begin{remark}
    In the case $p=q=1$, the question about the orders of the quantities $d_M(B^{\boldsymbol{r}}_{1,\theta}, L_1)$,  $\lambda_M(B^{\boldsymbol{r}}_{1,\theta}, L_1)$ and $\varepsilon_M ( B^{\boldsymbol{r}}_{1,\theta}, L_1)$  for  $d \geq 2$, $1\leq \theta\leq \infty$  remains open.  In particular, for the 
    classes $H^{\boldsymbol{r}}_1$ (see, e.g., \cite[Figures~4.3 and 4.5]{Dung_Temlyakov_Ullrich2019}).
\end{remark}

 Case $p=q= \infty$.  Here the following statements are known for the case $d=2$.

 \textbf{Theorem H}. \cite{Romanyuk2019}   {  \it
 Let  $ d = 2$, ${\boldsymbol{r}} = (r_1, r_1)$, $r_1 > 0$, $1 \leq \theta <  \infty$.  Then it holds
\begin{equation}\label{eqno51}
\varepsilon_M ( B^{\boldsymbol{r}}_{\infty,\theta}, L_{\infty}) \asymp  d_M ( B^{\boldsymbol{r}}_{\infty,\theta}, L_{\infty}) 
\asymp   M^{ - r_1} (\log  M)^{ r_1 + 1 - \frac{1}{\theta}}.
\end{equation}
}


 \textbf{Theorem I}.   {  \it
 Let  $d = 2$,  ${\boldsymbol{r}} = (r_1, r_1)$, $r_1 > 0$.  Then it holds
\begin{equation}\label{eqno52}
\varepsilon_M ( H^{\boldsymbol{r}}_\infty,  L_{\infty}) \asymp  d_M ( H^{\boldsymbol{r}}_{\infty}, L_{\infty})  \asymp   M^{ - r_1} (\log  M)^{ r_1 + 1}.
\end{equation}
}

 The  upper  estimate  for  $d_M ( H^{\boldsymbol{r}}_{\infty}, L_{\infty})$  is  realized by approximating  functions  from  the  classes  $H^{\boldsymbol{r}}_\infty$  by  trigonometric  polynomials  with  harmonics  from  the hyperbolic  cross \cite[p. 55]{Temlyakov1989_121p}.  For the  corresponding  lower  estimate  for  the entropy  numbers  $\varepsilon_M ( H^{\boldsymbol{r}}_\infty,  L_{\infty})$,  see \cite{Temlyakov1995_Complexity}.  Applying  Lemma B,  we get (\ref{eqno52}).

\begin{corollary}\label{cor4.5}
 Let    $d = 2$, ${\boldsymbol{r}} = (r_1, r_1)$, $r_1 > 0$, $1 \leq  \theta \leq \infty$. Then    it holds
\begin{equation}\label{eqno63}
\lambda_M ( B^{\boldsymbol{r}}_{\infty,\theta},  L_{\infty}) \asymp   M^{ - r_1} (\log M)^{ r_1 + 1 - \frac{1}{\theta}}.
\end{equation}
\end{corollary}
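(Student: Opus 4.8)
The plan is to sandwich $\lambda_M(B^{\boldsymbol{r}}_{\infty,\theta}, L_\infty)$ between a lower bound coming from the Kolmogorov width and an upper bound realized by an explicit linear method. For the lower bound I would use the general relation $d_M(F,\mathscr{Y}) \leq \lambda_M(F,\mathscr{Y})$ from (\ref{eqno38}) together with the known exact orders of the Kolmogorov widths of these classes in $L_\infty$: Theorem~H for $1\leq\theta<\infty$ and Theorem~I for $\theta=\infty$ (recalling that $B^{\boldsymbol{r}}_{\infty,\infty}\equiv H^{\boldsymbol{r}}_\infty$ and that the exponent $r_1+1-\frac1\theta$ becomes $r_1+1$ when $\theta=\infty$). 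This gives $\lambda_M(B^{\boldsymbol{r}}_{\infty,\theta}, L_\infty) \gg M^{-r_1}(\log M)^{r_1+1-\frac1\theta}$ for all $1\leq\theta\leq\infty$.

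For the upper bound I would first note that, since $d=2$ and $\boldsymbol{r}=(r_1,r_1)$, we have $\nu=d=2$, so $\boldsymbol{\gamma}'=\boldsymbol{\gamma}=\boldsymbol{1}$, $Q^{\boldsymbol{\gamma}'}_n=Q^{\boldsymbol{1}}_n$, and $M:=\dim T(Q^{\boldsymbol{1}}_n)\asymp|Q^{\boldsymbol{1}}_n|\asymp 2^n n$. Then I would take as the competing linear method the operator $f\mapsto t_n(f)$ defined in (\ref{eqno27.1}): being convolution with a fixed trigonometric polynomial (with $\|A_{\boldsymbol{s}}\|_1\ll 1$), it is a continuous linear map of $L_\infty$ into the $M$-dimensional subspace $T(Q^{\boldsymbol{1}}_n)$, hence admissible in the definition of $\lambda_M$. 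Using $\|\cdot\|_\infty\ll\|\cdot\|_{B_{\infty,1}}$ and the bound for $\sup_{f\in B^{\boldsymbol{r}}_{\infty,\theta}}\|f-t_n(f)\|_{B_{\infty,1}}$ already established in the proof of Theorem~\ref{Thm3} (taken with $p=\infty$, $\nu=2$; this is the same linear method used for the space $B_{\infty,1}$ in the proof of Theorem~\ref{Thm8}), I would conclude
\[
\lambda_M(B^{\boldsymbol{r}}_{\infty,\theta}, L_\infty) \leq \sup_{f\in B^{\boldsymbol{r}}_{\infty,\theta}}\|f-t_n(f)\|_\infty \ll \sup_{f\in B^{\boldsymbol{r}}_{\infty,\theta}}\|f-t_n(f)\|_{B_{\infty,1}} \ll 2^{-n r_1} n^{1-\frac1\theta}.
\]
Re-expressing via $M\asymp 2^n n$, i.e. $n\asymp\log M$ and $2^{-n r_1}\asymp M^{-r_1}(\log M)^{r_1}$, turns the right-hand side into $M^{-r_1}(\log M)^{r_1+1-\frac1\theta}$, which matches the lower bound and yields (\ref{eqno63}).

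I do not expect a genuine obstacle here: the statement is a direct combination of results already in hand. The only points that need a moment's care are verifying that $t_n$ is continuous as an operator on $L_\infty$ (immediate from its convolution structure) and keeping track of the fact that $\nu=d=2$ collapses $\boldsymbol{\gamma}'$ to $\boldsymbol{1}$, so that the logarithmic exponent $\nu-1$ appearing in Theorem~\ref{Thm3} coincides with the exponent $d-1=1$ occurring in Theorems~H and~I.
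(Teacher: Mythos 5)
Your proof is correct. The lower bound is exactly the paper's: combine $d_M \leq \lambda_M$ from (\ref{eqno38}) with the Kolmogorov-width orders in (\ref{eqno51}) and (\ref{eqno52}) (Theorems~H and~I, with $B^{\boldsymbol{r}}_{\infty,\infty}\equiv H^{\boldsymbol{r}}_\infty$). For the upper bound you take a genuinely different route: the paper simply cites the known $L_\infty$ results on approximation by hyperbolic-cross polynomials (Temlyakov for $\theta=\infty$, Romanyuk for $1\leq\theta<\infty$, i.e.\ essentially Theorem~E realized by a linear method), whereas you derive the bound internally by taking the linear operator $t_n$ from (\ref{eqno27.1}), checking it is bounded on $L_\infty$ and maps into the $M$-dimensional space $T(Q^{\boldsymbol{1}}_n)$, and then passing through the stronger norm via $\|\cdot\|_\infty \ll \|\cdot\|_{B_{\infty,1}}$ together with the error estimate already established in the proof of Theorem~\ref{Thm3} for $p=\infty$, $\nu=d=2$. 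Your version makes the corollary self-contained within the paper and fits its guiding theme that the $B_{\infty,1}$-norm dominates the $L_\infty$-norm; the paper's version is shorter but leans on external references. Your bookkeeping ($\boldsymbol{\gamma}'=\boldsymbol{\gamma}=\boldsymbol{1}$ since $\nu=d=2$, $M\asymp 2^n n$, hence $2^{-nr_1}n^{1-\frac{1}{\theta}}\asymp M^{-r_1}(\log M)^{r_1+1-\frac{1}{\theta}}$) is correct, so the two bounds match and (\ref{eqno63}) follows.
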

\begin{proof}
As in Theorem I, the  upper  estimate in Corollary~\ref{cor4.5}
is  realized  by  approximating functions  from  the  respective classes 
by  trigonometric  polynomials  with  harmonics  from  the hyperbolic  crosses (see
\cite[p. 55]{Temlyakov1989_121p}
for the case $\theta = \infty$  and  \cite{Romanyuk2011}  for  $1 \leq \theta < \infty$).

The lower estimate in (\ref{eqno63}) follows from (\ref{eqno51}) and  (\ref{eqno52}).
\end{proof}

To complement the already mentioned results, we formulate the known statement for the quantity $d_M(B^{\boldsymbol{r}}_{\infty,1}, L_\infty)$ in the case $d \geq 2$.

  \textbf{Theorem J}.  {  \it
  Let  $d \geq 2$, $r_1 > 0$.  Then it holds
\begin{equation}\label{eqno53}
 d_M ( B^{\boldsymbol{r}}_{\infty, 1}, L_{\infty})  \asymp  \lambda_M ( B^{\boldsymbol{r}}_{\infty, 1}, L_{\infty})  \asymp M^{ - r_1} (\log^{\nu - 1} M)^{ r_1}.
 \end{equation}
 }
The exact order of the Kolmogorov widths in (\ref{eqno53}) was obtained in \cite{Romanyuk2005_UMB}. 
The corresponding estimate for $\lambda_M ( B^{\boldsymbol{r}}_{\infty, 1}, L_{\infty})$ is due to 
the fact that the upper estimate for $d_M ( B^{\boldsymbol{r}}_{\infty, 1}, L_{\infty})$ is realized by a linear method.

\begin{remark}
    Comparing the estimates (\ref{eqno51}), (\ref{eqno52}), (\ref{eqno63}) and  (\ref{eqno53}) with the result of Theorem \ref{Thm8} for  $p = \infty$,
we see that the orders of the respective characteristics of the classes $B^{\boldsymbol{r}}_{\infty, \theta}$  in the spaces $B_{\infty, 1}$  and $L_\infty$ coincide. 

The question about the orders of the quantities $d_M(B^{\boldsymbol{r}}_{\infty,\theta}, L_\infty)$, \newline $\lambda_M ( B^{\boldsymbol{r}}_{\infty,\theta},  L_{\infty})$  and
$\varepsilon_M ( B^{\boldsymbol{r}}_{\infty,\theta}, L_\infty)$  in the case  $d \geq 3$   and    $1 < \theta \leq \infty$  still remains open.
\end{remark}

In  \cite[Theorem 4.4]{Cobos_Kuhn_Sickel_2019} it was shown that the approximation numbers for the embeddings of the periodic Sobolev spaces of functions with dominating mixed smoothness into the spaces 
$L_\infty$ and $B_{\infty,1}$ coincide. Based on the above results, we formulate a conjecture.

\textbf{Conjecture 2.} For $d\geq 2$, $r_1>0$,  $1 \leq \theta \leq \infty$ it holds
$$
\lambda_M ( B^{\boldsymbol{r}}_{\infty,\theta},  L_{\infty}) \asymp \lambda_M ( B^{\boldsymbol{r}}_{\infty,\theta},  B_{\infty,1}) \asymp   M^{ - r_1} (\log^{\nu - 1} M)^{ r_1 + 1 - \frac{1}{\theta}}.
$$

Case $2 \leq p=q < \infty$. 

\begin{theorem}\label{Thm7}
Let  $d \geq 2$, $r_1 > 0$, $2 \leq p < \infty$, $2 \leq \theta \leq \infty$.  Then     it holds
\begin{equation}\label{eqno54}
\varepsilon_M ( B^{\boldsymbol{r}}_{p,\theta},  L_p) \asymp  d_M ( B^{\boldsymbol{r}}_{p, \theta}, L_p) 
\asymp  \lambda_M ( B^{\boldsymbol{r}}_{p, \theta}, L_p)
\asymp 
M^{ - r_1} (\log^{\nu - 1} M)^{ r_1 +  \frac{1}{2} - \frac{1}{\theta}}.
\end{equation}
\end{theorem}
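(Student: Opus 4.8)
My plan is to follow verbatim the scheme of the proof of Theorem~\ref{Thm8}, but with $L_p$ in place of $B_{q,1}$: I would first determine the common order of $d_M(B^{\boldsymbol{r}}_{p,\theta},L_p)$ and $\varepsilon_M(B^{\boldsymbol{r}}_{p,\theta},L_p)$ by invoking Lemma~B (the Carl-inequality consequence), which only requires an upper bound for the Kolmogorov width and a matching lower bound for the entropy numbers; and then I would handle the linear width $\lambda_M$ by the sandwich $d_M\leq\lambda_M\leq(\text{error of a linear approximation method})$, using $(\ref{eqno38})$. In Lemma~B I would take $a=r_1>0$ and $b=r_1+\tfrac12-\tfrac1\theta$, which is admissible for all $\theta\in[2,\infty]$.

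For the upper bound I would use the step hyperbolic Fourier sum $S_{Q^{{\boldsymbol{\gamma}}'}_n}$, which for $2\leq p<\infty$ is a bounded linear operator on $L_p$ (by the Littlewood--Paley characterization), and estimate $\|f-S_{Q^{{\boldsymbol{\gamma}}'}_n}(f)\|_p$ for $f\in B^{\boldsymbol{r}}_{p,\theta}$ directly: bounding $\|\sum_{({\boldsymbol{s}},{\boldsymbol{\gamma}}')\geq n}\delta_{\boldsymbol{s}}(f)\|_p$ by $\big\|\big(\sum_{({\boldsymbol{s}},{\boldsymbol{\gamma}}')\geq n}|\delta_{\boldsymbol{s}}(f)|^2\big)^{1/2}\big\|_p$, then by $\big(\sum_{({\boldsymbol{s}},{\boldsymbol{\gamma}}')\geq n}\|\delta_{\boldsymbol{s}}(f)\|_p^2\big)^{1/2}$ (triangle inequality in $L_{p/2}$, valid since $p\geq2$), and finally applying H\"older's inequality with exponent $\theta/2$ together with Lemma~A (its ${\boldsymbol{\gamma}}'$-version) to the factor $\big(\sum_{({\boldsymbol{s}},{\boldsymbol{\gamma}}')\geq n}2^{-({\boldsymbol{s}},{\boldsymbol{r}})\cdot 2\theta/(\theta-2)}\big)^{(\theta-2)/(2\theta)}$. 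This yields $\sup_{f\in B^{\boldsymbol{r}}_{p,\theta}}\|f-S_{Q^{{\boldsymbol{\gamma}}'}_n}(f)\|_p\ll 2^{-nr_1}n^{(\nu-1)(\frac12-\frac1\theta)}$ (the case $\theta=2$ being the trivial $\ll 2^{-nr_1}$, and the case $\theta=\infty$ following from $(\ref{norm_via_A_s_infty})$); alternatively one may simply quote Theorem~B. Since $|Q^{{\boldsymbol{\gamma}}'}_n|\asymp 2^n n^{\nu-1}=:M$, this gives at once $d_M(B^{\boldsymbol{r}}_{p,\theta},L_p)\leq\lambda_M(B^{\boldsymbol{r}}_{p,\theta},L_p)\ll M^{-r_1}(\log^{\nu-1}M)^{r_1+\frac12-\frac1\theta}$ for $M\asymp 2^n n^{\nu-1}$, and the general $M$ is recovered by interpolating between consecutive values via the monotonicity of $d_M$ and $\lambda_M$.

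For the lower bound for $\varepsilon_M$ I would reduce to $\nu=d$ (as in the proof of Theorem~\ref{Thm2}) and then recycle the apparatus from the proof of Theorem~\ref{Thm5}: the Temlyakov packing estimate $(\ref{eqno41})$ for $T'(Q'_n)_\infty\,2^{-nr_1}$ in $L_2$, with $M=|Q'_n|\asymp2^nn^{d-1}$, together with the embedding $(\ref{eqno42})$, which combined with $\|\cdot\|_p\leq\|\cdot\|_\infty$ gives $T'(Q'_n)_\infty\,2^{-nr_1}n^{-(d-1)/\theta}\subset C_8\,B^{\boldsymbol{r}}_{\infty,\theta}\subset C_8\,B^{\boldsymbol{r}}_{p,\theta}$. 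The key simplification compared with Theorem~\ref{Thm5} is that here the target space is $L_p$ with $p\geq2$, so $\|\cdot\|_{L_p}\geq\|\cdot\|_{L_2}$ transfers the $L_2$-separation of the $2^M$ packing points to $L_p$ \emph{for free} — no analogue of the Cauchy--Schwarz step $(\ref{eqno47})$ is needed. This yields $\varepsilon_M(B^{\boldsymbol{r}}_{p,\theta},L_p)\gg n^{-(d-1)/\theta}\varepsilon_M\big(T'(Q'_n)_\infty\,2^{-nr_1},L_2\big)\gg 2^{-nr_1}n^{(d-1)(\frac12-\frac1\theta)}\asymp M^{-r_1}(\log^{d-1}M)^{r_1+\frac12-\frac1\theta}$; passing back to general $1\leq\nu\leq d$ and to general $M$ as before, then applying Lemma~B and $(\ref{eqno38})$ completes the proof of $(\ref{eqno54})$. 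The only genuinely nontrivial point is this entropy lower bound, and it is not new work: it rests entirely on Temlyakov's volume/packing estimate from \cite{Temlyakov1990}, already recorded here as $(\ref{eqno41})$, so the main obstacle is bookkeeping — checking that the scaling factor $n^{-(d-1)/\theta}$ in the embedding $(\ref{eqno42})$ matches the $M\asymp 2^n n^{d-1}$ normalization and the power of $\log M$ in the target order.
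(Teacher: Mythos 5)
Your proposal is correct, and its skeleton --- Lemma~B applied to an upper bound for $d_M$ and a matching lower bound for $\varepsilon_M$, plus the sandwich $d_M\leq\lambda_M\ll \mathscr{E}_{Q^{\boldsymbol{\gamma}'}_n}(B^{\boldsymbol{r}}_{p,\theta})_p$ from (\ref{eqno38}) for the linear widths --- is exactly the paper's. The upper bound is also handled identically (the paper simply quotes Theorem~B, which is your fallback option). The one place where you genuinely diverge is the entropy lower bound: the paper does not rerun the packing argument, but writes $\varepsilon_M(B^{\boldsymbol{r}}_{p,\theta},L_p)\geq\varepsilon_M(B^{\boldsymbol{r}}_{\infty,\theta},L_1)$ and cites \cite[Thm.~3]{Romanyuk2015} for the lower bound of the right-hand side (and, for $\theta=\infty$, the known bound for $\varepsilon_M(H^{\boldsymbol{r}}_{\infty},L_1)$ going back to \cite{Temlyakov1990}). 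Your route --- combining the $L_2$ packing estimate (\ref{eqno41}) with the embedding (\ref{eqno42}), the inclusion $B^{\boldsymbol{r}}_{\infty,\theta}\subset B^{\boldsymbol{r}}_{p,\theta}$, and the monotonicity $\|\cdot\|_2\leq\|\cdot\|_p$ for $p\geq2$ --- is sound and in fact cleaner in this parameter range: measuring the error in $L_p$ with $p\geq2$ lets you inherit the $L_2$ separation of the $2^M$ packing points for free, whereas the $L_1$ target used in the cited reference requires an additional step of the type (\ref{eqno47}) to convert $L_2$-separation into $L_1$-separation (that extra work is precisely what is buried in the citation to \cite{Romanyuk2015}). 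So your argument is marginally more self-contained and more elementary, at the cost of redoing the reduction to $\nu=d$ and the $M\asymp2^{n}n^{d-1}$ bookkeeping that the citation already packages; both yield (\ref{eqno54}).
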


\begin{proof} 
Let first $2\leq \theta < \infty$. Then, on the one hand, Theorem B as $M \asymp 2^n n^{\nu - 1}$ yields
\begin{align}
d_M ( B^{\boldsymbol{r}}_{p, \theta}, L_p)  &\ll  \mathscr{E}_{Q^{{\boldsymbol{\gamma}} '}_n}(B^{\boldsymbol{r}}_{p, \theta})_p \asymp 2^{- n r_1} n^{(\nu - 1)(\frac{1}{2} - \frac{1}{\theta})} \nonumber \\
&\asymp M^{ - r_1} (\log^{\nu - 1} M)^{ r_1 +  \frac{1}{2} - \frac{1}{\theta}}.
\label{eqno55}
\end{align}

On the other hand, according to  \cite[Thm. 3]{Romanyuk2015}, it holds
\begin{equation}\label{eqno56}
\varepsilon_M ( B^{\boldsymbol{r}}_{p,\theta},  L_p)  \geq \varepsilon_M ( B^{\boldsymbol{r}}_{\infty,\theta},  L_1) \gg
 M^{ - r_1} (\log^{\nu - 1} M)^{ r_1 +  \frac{1}{2} - \frac{1}{\theta}}.
 \end{equation}

Using Lemma B with respect to the estimates (\ref{eqno55})  and (\ref{eqno56}), we get
$$
\varepsilon_M ( B^{\boldsymbol{r}}_{p,\theta},  L_p)   \asymp  d_M ( B^{\boldsymbol{r}}_{p, \theta}, L_p)  \asymp
 M^{ - r_1} (\log^{\nu - 1} M)^{ r_1 +  \frac{1}{2} - \frac{1}{\theta}}, \quad 2\leq \theta < \infty.
$$

Let $\theta = \infty$. Then we 
use Lemma B  with respect to the estimates
$$
\varepsilon_M ( H^{\boldsymbol{r}}_{\infty},  L_1) \gg M^{ - r_1} (\log^{\nu - 1} M)^{ r_1 +  \frac{1}{2}}
$$
and
$$
d_M ( H^{\boldsymbol{r}}_p,  L_p)   \ll    M^{ - r_1} (\log^{\nu - 1} M)^{ r_1 +  \frac{1}{2}}
$$
(see, e.g., \cite{Temlyakov1989}  and \cite[Ch. 3, $\S$\,2]{Temlyakov1989_121p}, respectively).

As for the linear widths, the respective estimate is due to the relation
$$
d_M ( B^{\boldsymbol{r}}_{p, \theta}, L_p)  \leq \lambda_M ( B^{\boldsymbol{r}}_{p, \theta}, L_p)
\ll  \mathscr{E}_{Q^{{\boldsymbol{\gamma}} '}_n}(B^{\boldsymbol{r}}_{p, \theta})_p
$$
under the condition $M \asymp 2^n n^{\nu - 1}$.

Theorem \ref{Thm7} is proved.
\end{proof}

Let further $p\neq q$. The following statement is known.

\textbf{Theorem K} \cite{Romanyuk2015}.   {  \it
Let   $d \geq 2$, $r_1 > 0$, $2 \leq  p \leq \infty$, $1 \leq  q < p$, $2 \leq \theta \leq \infty$. Then   it holds
\begin{equation}\label{eqno60}
\varepsilon_M ( B^{\boldsymbol{r}}_{p,\theta},  L_q) \asymp  d_M ( B^{\boldsymbol{r}}_{p, \theta}, L_q)  \asymp
 M^{ - r_1} (\log^{\nu - 1} M)^{ r_1 + \frac{1}{2} - \frac{1}{\theta}}.
 \end{equation}
}

 In the case $\theta = \infty$, the estimates for the quantity $d_M ( H^{\boldsymbol{r}}_p, L_q)$  with respective comments can be found in
 \cite[Thms. 4.3.10 -- 4.3.13]{Dung_Temlyakov_Ullrich2019}. The lower estimate for the  entropy numbers $\varepsilon_M ( H^{\boldsymbol{r}}_{\infty},  L_1)$ was obtained in~\cite{Temlyakov1990}.
 
From Theorems F and K, we obtain the following.
\begin{corollary}\label{cor6}
Let $d \geq 2$, $r_1 > 0$, $2 \leq  p \leq  \infty$, $1 \leq q < p$, $2 \leq  \theta \leq \infty$.  Then  it holds
\begin{equation}\label{eqno66}
\lambda_M ( B^{\boldsymbol{r}}_{p,\theta},  L_q) \asymp   M^{ - r_1} (\log^{\nu - 1} M)^{ r_1 + \frac{1}{2} - \frac{1}{\theta}}.
\end{equation}
\end{corollary}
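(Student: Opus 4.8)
The plan is to trap $\lambda_M(B^{\boldsymbol{r}}_{p,\theta},L_q)$ between a lower bound coming from the Kolmogorov width and a matching upper bound coming from a concrete linear method, both already available.

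For the lower bound I would combine the elementary inequality (\ref{eqno38}), $d_M(F,\mathscr{Y})\le\lambda_M(F,\mathscr{Y})$, with the lower estimate for $d_M(B^{\boldsymbol{r}}_{p,\theta},L_q)$ in Theorem~K, whose hypotheses coincide with those of Corollary~\ref{cor6}. This gives at once
$$
\lambda_M(B^{\boldsymbol{r}}_{p,\theta},L_q)\ \ge\ d_M(B^{\boldsymbol{r}}_{p,\theta},L_q)\ \gg\ M^{-r_1}(\log^{\nu-1}M)^{\,r_1+\frac12-\frac1\theta}.
$$

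For the upper bound I would use as a rank-$M$ linear operator the step hyperbolic Fourier sum $S_{Q^{\boldsymbol{\gamma}'}_n}$ (for $q=1$ one may instead use the polynomial aggregate $t_n$ from (\ref{eqno27.1})). Its range lies in $T(Q^{\boldsymbol{\gamma}'}_n)$, a subspace of dimension $M:=|Q^{\boldsymbol{\gamma}'}_n|\asymp 2^n n^{\nu-1}$, and it is continuous on $L_q$ for every $1\le q<\infty$ (being of finite rank), so $\lambda_M(B^{\boldsymbol{r}}_{p,\theta},L_q)\le\mathscr{E}_{Q^{\boldsymbol{\gamma}'}_n}(B^{\boldsymbol{r}}_{p,\theta})_q$. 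Since $p^{\ast}=\min\{2,p\}=2$ under our assumptions, Theorem~F (for $1<q<p$) and Theorem~G (for $q=1$) give $\mathscr{E}_{Q^{\boldsymbol{\gamma}'}_n}(B^{\boldsymbol{r}}_{p,\theta})_q\asymp 2^{-n r_1}n^{(\nu-1)(\frac12-\frac1\theta)}$, and recalling $n\asymp\log M$ this is $\asymp M^{-r_1}(\log^{\nu-1}M)^{\,r_1+\frac12-\frac1\theta}$. Passing from the discrete values $M\asymp 2^n n^{\nu-1}$ to arbitrary $M$ is the usual routine step (monotonicity of $\lambda_M$ in $M$ and comparability of consecutive $M_n$).

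I expect the only mildly delicate point to be the endpoint $\theta=2$: there $\theta=p^{\ast}$ falls on the boundary between the two clauses of Theorems~F and~G, and only the $Q^{\boldsymbol{\gamma}}_n$-version is stated explicitly. However, for $\theta=2$ and $p\ge 2$ the required bound $\mathscr{E}_{Q^{\boldsymbol{\gamma}'}_n}(B^{\boldsymbol{r}}_{p,2})_q\ll 2^{-n r_1}$ follows directly: estimate $\big\|\sum_{(\boldsymbol{s},\boldsymbol{\gamma}')\ge n}\delta_{\boldsymbol{s}}(f)\big\|_q$ by $\big(\sum_{(\boldsymbol{s},\boldsymbol{\gamma}')\ge n}\|\delta_{\boldsymbol{s}}(f)\|_q^2\big)^{1/2}$ (Parseval for $q\le 2$, Littlewood--Paley for $2<q<\infty$), use $\|\delta_{\boldsymbol{s}}(f)\|_q\le\|\delta_{\boldsymbol{s}}(f)\|_p$ (valid since $q<p$ and $p\ge2$), and factor out $2^{-(\boldsymbol{s},\boldsymbol{r})}=2^{-r_1(\boldsymbol{s},\boldsymbol{\gamma})}\le 2^{-r_1 n}$ on the summation range, noting $(\boldsymbol{s},\boldsymbol{\gamma})\ge(\boldsymbol{s},\boldsymbol{\gamma}')\ge n$. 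Everything else — the lower bound and the interpolation in $M$ — is immediate.
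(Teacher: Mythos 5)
Your proposal is correct and follows essentially the same route as the paper, which derives Corollary~\ref{cor6} directly from Theorems~F and~K: the lower bound via $d_M\le\lambda_M$ and the upper bound by the linear operator $S_{Q^{\boldsymbol{\gamma}'}_n}$ (resp.\ the aggregate from (\ref{eqno27.1})) of rank $M\asymp 2^n n^{\nu-1}$. Your extra care at the endpoint $\theta=2$ is sound but not strictly needed, since the second clause of Theorems~F and~G already gives $\mathscr{E}_{Q^{\boldsymbol{\gamma}}_n}(B^{\boldsymbol{r}}_{p,2})_q\ll 2^{-nr_1}$ and $|Q^{\boldsymbol{\gamma}}_n|\asymp 2^n n^{\nu-1}$ as well, which matches the claimed order.
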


\begin{remark}
   Comparing   (\ref{eqno57})  with   (\ref{eqno54}), (\ref{eqno60}) and (\ref{eqno66}) , we conclude that 
 under the conditions of Theorems~\ref{Thm7}, K and Corollary \ref{cor6}
on the parameters $p$, $q$  and $\theta$, the corresponding approximation characteristics of the classes  $B^{\boldsymbol{r}}_{p,\theta}$   in the spaces $L_q$  and $B_{q,1}$ differ in order.
 
 For the remaining values of the parameters $p$, $q$ and $\theta$,
that fulfil the conditions of Theorem \ref{Thm8}, the orders of the quantities $d_M ( B^{\boldsymbol{r}}_{p,\theta},  L_q)$, $\lambda_M ( B^{\boldsymbol{r}}_{p,\theta},  L_q)$ and
$\varepsilon_M ( B^{\boldsymbol{r}}_{p, \theta},  L_q)$, to the best of our knowledge, remain open.
\end{remark}

\vskip4mm

\noindent\textbf{Acknowledgement.} KP would like to acknowledge support by the Philipp Schwartz Fellowship of the Alexander von Humboldt Foundation.





\end{document}